\newtheorem{theorem}{Theorem}[section]
\newtheorem{corollary}[theorem]{Corollary}
\theoremstyle{definition}
\newtheorem{definition}[theorem]{Definition}
\newtheorem{conjecture}[theorem]{Conjecture}
\newtheorem{question}[theorem]{Question}
\newtheorem{proposition}[theorem]{Proposition}
\newtheorem{example}[theorem]{Example}
\newtheorem{remark}[theorem]{Remark}
\newtheorem{lemma}[theorem]{Lemma}
\def\Zbb{\mathbb{Z}}
\def\Rbb{\mathbb{R}}
\def\Cbb{\mathbb{C}}
\def\Fbb{\mathbb{F}}
\newcommand{\G}{
	\mathrm{SL}_{2}(\mathbb{C})
}
\newcommand{\g}{
	\mathfrak{g}
}
\newcommand{\tr}{
	\mathrm{tr}\mkern 1mu
}
\newcommand{\Ad}{
	\mathrm{Ad}\rho
}
\newcommand{\rvline}{\hspace*{-\arraycolsep}\vline\hspace*{-\arraycolsep}}
\begin{document}
	
	\title[ ]{The adjoint Reidemeister torsion\\for the connected sum of knots}
	

	\author[ ]{Joan Porti}
	\address{Departament de Matem\`atiques, Universitat Aut\`onoma de Barcelona, 08193 Cerdanyola del Vall\`es, Spain}
	\email{porti@mat.uab.cat}
	
	\author[ ]{Seokbeom Yoon}
	\address{Departament de Matem\`atiques, Universitat Aut\`onoma de Barcelona, 08193 Cerdanyola del Vall\`es, Spain}
	\email{sbyoon15@mat.uab.cat}

	
	
	
	
	\begin{abstract} 
		Let $K$ be the connected sum of knots $K_1,\ldots,K_n$. It is known that the $\mathrm{SL}_2(\mathbb{C})$-character variety of the knot exterior of $K$ has a component of dimension $\geq 2$ as the connected sum admits a so-called  bending.
		We show that there is a natural way to define the adjoint Reidemeister torsion for such a high-dimensional component and prove that  it is locally constant on a subset of the character variety where the trace of a meridian is constant. We also prove that the adjoint Reidemeister torsion of $K$  satisfies the vanishing identity if  each $K_i$ does so.
	\end{abstract} 
	
	\maketitle
	
	\section{Introduction}
	
	Let $M$ be a compact oriented 3-manifold with tours boundary and $\mathcal{X}(M)$ be the character variety of irreducible representations  $\pi_1(M) \rightarrow \G$.
	It happens very often that $\mathcal{X}(M)$ has a component of dimension 1. 
	For instance, if the interior of $M$ admits a hyperbolic structure of finite volume, then the distinguished component is 1-dimensional \cite{thurston1979geometry} and
	 if $M$ contains no closed essential surface, then every component is 1-dimensional  \cite{cooper1994plane}.
	  
	 Once we fix a simple closed curve $\mu$ on the boundary torus $\partial M$, the \emph{adjoint Reidemeister torsion} is defined as a meromorphic function  on each 1-dimensional component of  $\mathcal{X}(M)$   under a mild assumption \cite{porti1997torsion, dubois2003torsion}.
	 It enjoys fruitful interaction with quantum field theory and carries several conjectures consequently. See, for instance, \cite{dimofte2013quantum, ohtsuki2015kashaev,  gang2019precision}.
	 Recently, it is conjectured in \cite{gang2019adjoint} that the adjoint Reidemeister torsion satisfies a certain vanishing identity with respect to  the trace function as follows.
	
	\begin{conjecture} \label{conj:main}
		Suppose that the character variety $\mathcal{X}(M)$ consists of 1-dimensional components and
		the interior of $M$ admits a hyperbolic structure of finite volume.
	Then for generic $c \in \Cbb$ we have
	\begin{equation} \label{eqn:conj}
		\sum_{[\rho] \in \mathcal{X}_\mu^c(M)} \frac{1}{\tau_\mu(M;\rho)} = 0
	\end{equation}
	where $\mathcal{X}_\mu^c(M)$ is the pre-image of $c \in \Cbb$ under the trace function $\mathcal{X}(M) \rightarrow \Cbb$ of $\mu\subset \partial M$ and  $\tau_\mu(M;\rho)$ is the adjoint Reidemeister torsion associated to $\mu$ and a representation $\rho:\pi_1(M)\rightarrow \G$.
	\end{conjecture}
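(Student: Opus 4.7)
The natural plan is to interpret \eqref{eqn:conj} as an instance of the residue theorem on a projective completion of $\mathcal{X}(M)$, reducing it to a vanishing statement at the ``ideal'' points of the character variety. A closely parallel identity for polynomials, $\sum_i 1/p'(x_i)=0$ whenever $\deg p\geq 2$, is the toy model to keep in mind.

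The key analytic input is Porti's formula for the adjoint Reidemeister torsion: along an irreducible arc of a 1-dimensional component, if $m$ and $\ell$ denote the eigenvalues of $\rho(\mu)$ and $\rho(\lambda)$ for a longitude $\lambda$ completing $\mu$ to a basis of $\pi_1(\partial M)$, then $\tau_\mu(M;\rho)$ is expressed (up to the standard factor $m-m^{-1}$) in terms of the logarithmic derivative $(d\log\ell)/(d\log m)$ along the component. Equivalently, $1/\tau_\mu(M;\rho)$ is, up to an explicit algebraic factor in $m$ and $\ell$, the germ of a residue in the variable $\ell$ at a root of the slice $A(m_0,\ell)=0$ of the $A$-polynomial. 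Fixing a generic $c\in\Cbb$ and setting $m_0+m_0^{-1}=c$, the fiber $\mathcal{X}_\mu^c(M)$ is identified with the zeros $\ell_1,\ldots,\ell_d$ of $A(m_0,\ell)$, where $A$ assembles the 1-dimensional components of $\mathcal{X}(M)$ with the abelian factor stripped. Summing Porti's formula over these zeros then rewrites the left-hand side of \eqref{eqn:conj} as a sum of residues on the $\ell$-line $\mathbb{P}^1$ of a single meromorphic 1-form $\omega$, and the residue theorem reduces the conjecture to the vanishing of the residues of $\omega$ at $\ell=0$ and $\ell=\infty$.

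The main obstacle is precisely this boundary analysis. The heuristic reason one expects the residues at $0$ and $\infty$ to cancel is exactly the polynomial identity above: as soon as $A(m_0,\ell)$ has $\ell$-degree at least $2$, which is the generic situation for a hyperbolic knot exterior different from the unknot, a partial-fractions computation produces the required cancellation. Turning this into a proof would demand (i) controlling the asymptotics of $\tau_\mu$ at the ideal points of $\mathcal{X}(M)$ in the sense of Culler--Shalen, whose geometry for hyperbolic $M$ is governed by essential surfaces and their boundary slopes, and (ii) verifying that reducible non-abelian characters, which live on the Alexander stratum, generically miss $\mathcal{X}_\mu^c(M)$ and therefore contribute nothing. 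The hyperbolicity hypothesis should ensure that no ideal point carries a non-trivial residue, but a careful argument, plausibly via the Newton polygon of $A(m,\ell)$ and its associated boundary-slope data, seems necessary to make this precise.
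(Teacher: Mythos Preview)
The statement you are attempting to prove is \emph{Conjecture}~\ref{conj:main}, not a theorem, and the paper does not prove it. The paper states it as an open conjecture from \cite{gang2019adjoint} and then establishes the conditional results Theorems~\ref{thm:main0} and~\ref{thm:main}: for a connected sum $K=K_1\#\cdots\#K_n$ the torsion can be defined and is locally constant on the level sets $\mathcal{X}^c_\mu(M)$, and the identity \eqref{eqn:conj} holds for $M$ provided it holds for each $M_j$. There is therefore no proof in the paper to compare your proposal against.

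Your write-up is in any case not a proof but a strategy sketch, and you acknowledge this yourself (``Turning this into a proof would demand\ldots''). The residue-theoretic approach via the $A$-polynomial you outline is the standard heuristic and is indeed how special cases such as two-bridge knots are handled in \cite{yoon2020adjoint}. But the obstacles you list are genuine and unresolved in general: controlling the residues of the torsion $1$-form at ideal points of the character variety, and ruling out contributions from components not captured by the $A$-polynomial, are exactly the missing ingredients. The hyperbolicity hypothesis alone does not obviously force the Newton polygon of $A(m,\ell)$ to have the shape needed for the residues at $\ell=0,\infty$ to cancel, and there is no known argument deducing this from Culler--Shalen theory. So what you have written is a reasonable summary of why the conjecture is plausible, not a proof of it.
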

	 
	As mentioned earlier, there are several  3-manifolds satisfying the conditions required in Conjecture \ref{conj:main}. 
	However, there are also several examples of 3-manifolds with torus boundary whose character varieties have high-dimensional components.  The simplest one might be (the knot exterior of) the connected sum of knots.  
	We refer to \cite{cooper1996remarks, paoluzzi2013non, chen2021rm} for other examples.
	Two immediate problems when we consider Conjecture \ref{conj:main}  for such 3-manifolds are that
	\begin{itemize}
		\item[(P1)] the adjoint Reidemeister torsion is not defined for a component of dimension $\geq 2$;
		\item[(P2)]  the sum in the equation \eqref{eqn:conj} does not make sense as the level set $\mathcal{X}_\mu^c(M)$ is no longer finite.
	\end{itemize}
	Related to these problems, we address the following question.
	\begin{question}\label{ques}
	    Is the adjoint Reidemeister torsion defined and locally constant on $\mathcal{X}_\mu^c(M)$?
	\end{question}
	\noindent If the answer of Question \ref{ques} is positive, then the sum in the equation \eqref{eqn:conj} makes sense for $M$ in an obvious way: by taking one representative on each connected component of $\mathcal{X}_\mu^c(M)$.

	The main purpose of the paper is to investigate Question \ref{ques} and Conjecture \ref{conj:main} for the connected sum of knots. Let $K$ be the connected sum of knots $K_1,\ldots,K_n$ in $S^3$ and $\mu$ be a meridian.
	We denote by $M$ and $M_j$ the knot exteriors of $K$ and $K_j$, respectively.
	For technical reasons, we assume that for $1 \leq j \leq n$
	\begin{itemize} 
		\item[(C)] the level set $\mathcal{X}_\mu^c(M_j)$ consists of finitely many $\mu$-regular characters with the canonical longitude having trace other than $\pm2$  for generic $c \in \Cbb$. 
	\end{itemize}
	For example, one may choose $K_j$ as a two-bridge knot or a torus knot.
	It is known that the character variety $\mathcal{X}(M)$  has a component of dimension $\geq 2$ as the connected sum admits a so-called \emph{bending}.	
	We refer to \cite{johnson1987deformation, paoluzzi2013non, kitano2020finiteness} for details on the bending construction.

	\begin{theorem} \label{thm:main0}
	Let $K$ be the connected sum of knots $K_1,\ldots, K_{n}$  satisfying the above condition (C) and $\mu$ be a meridian.
	Then there is a natural way to define the adjoint Reidemeister torsion on $\mathcal{X}^c_\mu(M)$ for generic $c \in \Cbb$ which is locally constant.
	\end{theorem}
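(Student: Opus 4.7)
Let $A_1,\ldots,A_{n-1}$ be the gluing annuli coming from the connected sum decomposition, so that $M = M_1 \cup_{A_1} M_2 \cup_{A_2} \cdots \cup_{A_{n-1}} M_n$ and $\pi_1(M)$ is the iterated amalgamated free product of the groups $\pi_1(M_j)$ over $\langle\mu\rangle$. For a representation $\rho : \pi_1(M)\to \G$, write $\rho_j$ for its restriction to $\pi_1(M_j)$. If $c = \tr\rho(\mu) \neq \pm 2$, the element $\rho(\mu)$ is regular semisimple and its centralizer $C \subset \G$ is a one-dimensional torus. After fixing the value of $\rho(\mu)$, gluing the $\rho_j$ imposes no relation beyond this common value, and conjugation by $C$ applied to any tail $\pi_1(M_k)\ast_{\langle\mu\rangle}\cdots\ast_{\langle\mu\rangle}\pi_1(M_n)$ produces a new representation of $\pi_1(M)$. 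So a connected component of $\mathcal{X}_\mu^c(M)$ near $[\rho]$ is an $(n-1)$-dimensional $C^{n-1}$-orbit of such bendings, labelled by a tuple $([\rho_1],\ldots,[\rho_n]) \in \prod_j \mathcal{X}_\mu^c(M_j)$ of summand characters, which is locally constant by condition (C).

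Next I would compute the twisted cohomology $H^\ast(M;\mathrm{Ad}\rho)$. Since $\mathrm{Ad}\rho(\mu)$ has eigenvalues $1$ and $\lambda^{\pm 2}$ with $\lambda+\lambda^{-1}=c$, one has $\dim H^i(A_j;\mathrm{Ad}\rho) = 1$ for $i\in\{0,1\}$, generated in degree $0$ by the element $\mathfrak{z}\in\mathrm{Lie}(C)$. Under (C), each $H^1(M_j;\mathrm{Ad}\rho_j)$ is one-dimensional and the corresponding $H^0$ vanishes. The Mayer--Vietoris sequence of the decomposition then yields $\dim H^1(M;\mathrm{Ad}\rho) = n$, with a natural basis given by the $n-1$ bending cocycles $b_1,\ldots,b_{n-1}$ (infinitesimal conjugation by $\exp(t\mathfrak{z})$ on the tails) together with a lift $v$ of a chosen generator of some $H^1(M_j;\mathrm{Ad}\rho_j)$. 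Poincar\'e--Lefschetz duality supplies a compatible basis of $H^2(M;\mathrm{Ad}\rho)$ from the dual of the $\mu$-component of $H^1(\partial M;\mathrm{Ad}\rho)$.

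The adjoint Reidemeister torsion $\tau_\mu(M;\rho)$ is then defined as the alternating product of the Reidemeister torsions of a CW-decomposition compatible with the connected sum, using this natural basis of twisted cohomology together with the standard convention at the torus boundary. Applying the multiplicativity of Reidemeister torsion to the short exact sequence of chain complexes underlying the Mayer--Vietoris decomposition yields a gluing formula of the form
\[
\tau_\mu(M;\rho) \;=\; \Phi(c)\cdot\prod_{j=1}^n \tau_\mu(M_j;\rho_j),
\]
where $\Phi(c)$ is an explicit rational factor depending only on the eigenvalues of $\rho(\mu)$ and the normalisation of $\mathfrak{z}$. Both factors are constant along a connected component of $\mathcal{X}_\mu^c(M)$: the summand torsions because the $[\rho_j]$ lie in the discrete sets $\mathcal{X}_\mu^c(M_j)$ supplied by (C), and $\Phi(c)$ because $c$ is fixed. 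This yields the local constancy.

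The principal obstacle will be to verify that the natural basis of $H^1(M;\mathrm{Ad}\rho)$ constructed above descends to a well-defined function on the character variety, that is, that the bending classes and their normalisation by $\mathfrak{z}$ are independent of the choice of representative in the $\G$-orbit of $\rho$ up to a predictable overall scalar that cancels in the torsion. The hypotheses $c\neq\pm 2$, $\mu$-regularity, and the canonical longitudes having trace different from $\pm 2$ are precisely what is needed here to guarantee that the relevant boundary contributions are unambiguous and that $C$ acts freely on the space of bending parameters; once this well-definedness is in hand, the gluing formula and the local constancy follow formally from the multiplicativity of torsion.
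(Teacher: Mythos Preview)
Your overall strategy---decompose $M$, compute twisted cohomology by Mayer--Vietoris, choose a natural basis using the bending cocycles plus one $\mu$-direction class, and deduce a product formula $\tau_\mu(M;\rho)=\Phi(c)\prod_j\tau_\mu(M_j;\rho_j)$---is exactly the one the paper follows, and the local constancy is indeed a corollary of such a formula together with the description of the components of $\mathcal{X}_\mu^c(M)$ as bending orbits.

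There is, however, a genuine gap. You assume throughout that each restriction $\rho_j$ is irreducible: you write that $H^0(M_j;\mathrm{Ad}\rho_j)$ vanishes, that the component of $[\rho]$ is labelled by a tuple in $\prod_j\mathcal{X}_\mu^c(M_j)$, and that $\dim H^1(M;\mathrm{Ad}\rho)=n$. But an irreducible $\rho:\pi_1(M)\to\G$ may restrict to an \emph{abelian} representation on some of the $M_j$ (indeed, for every generic $c$ there is such an abelian $\alpha_j$ with meridian trace $c$, and splicing it with an irreducible $\rho_{j'}$ on another summand still yields an irreducible $\rho$). When $k$ of the restrictions are abelian one has $\dim H^0(M_j;\mathrm{Ad}\rho_j)=1$ for those $j$, the connected component is only $(n-k-1)$-dimensional, and in fact $\dim H^1(M;\mathrm{Ad}\rho)=n-k$, not $n$. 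Your basis of $n-1$ bending cocycles is then too large and linearly dependent. The paper handles this by distinguishing the abelian and irreducible summands explicitly (their index $k$), adjusting the basis of $H^\ast(M;\g_\rho)$ accordingly, and showing that the restriction map lands in $\prod_j\big(\mathcal{X}_\mu^c(M_j)\sqcup\{[\alpha_j]\}\big)\setminus\{([\alpha_1],\ldots,[\alpha_n])\}$ rather than in $\prod_j\mathcal{X}_\mu^c(M_j)$.

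A secondary remark on the decomposition: you cut $M$ along $n-1$ essential annuli, whereas the paper uses the JSJ decomposition, cutting along $n$ tori into the knot exteriors $M_1,\ldots,M_n$ and an $(n+1)$-holed--sphere$\times S^1$ composing space $Y$. Both are legitimate; the paper's choice has the advantage that the gluing pieces are tori (where the torsion conventions for $\mu$-regularity are already set up) and that the composing space absorbs the combinatorics of the bending in a single explicit calculation $\tau(Y)=(-1)^{n-1}(m-m^{-1})^{2n-2}$, giving $\Phi(c)=(c^2-4)^{n-1}$. Your annulus approach would recover the same factor, but you would have to redo the abelian-summand bookkeeping at each annulus separately.
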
 
	
	\begin{theorem} \label{thm:main} Let $K$ be the connected sum of knots $K_1,\ldots, K_{n}$  satisfying the above condition (C) and $\mu$ be a meridian. Then the knot exterior $M$ of $K$ satisfies the equation \eqref{eqn:conj}  if each $M_j$ does so.
	\end{theorem}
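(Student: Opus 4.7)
The plan is to combine Theorem \ref{thm:main0} with a Mayer--Vietoris-type multiplicativity formula for $\tau_\mu$ under the connected-sum decomposition. By van Kampen's theorem, $\pi_1(M) = \pi_1(M_1) *_{\langle\mu\rangle} \cdots *_{\langle\mu\rangle} \pi_1(M_n)$, so an irreducible representation $\rho:\pi_1(M)\to\G$ determines a tuple $(\rho_1,\ldots,\rho_n)$ of restrictions satisfying $\rho_1(\mu) = \cdots = \rho_n(\mu)$; conversely, any such matching tuple assembles into a representation of $\pi_1(M)$. Since condition (C) forces $\tr\rho(\mu) = c \ne \pm 2$, the centralizer of $\rho(\mu)$ in $\G$ is a one-dimensional torus, and the ``bending'' action that conjugates each $\rho_j$ independently by elements of this centralizer (modulo the overall diagonal conjugation) is an $(n-1)$-dimensional torus action whose orbits are the fibres of the restriction map
\[
    \mathcal{X}^c_\mu(M) \longrightarrow \prod_{j=1}^{n} \mathcal{X}^c_\mu(M_j).
\]
This map is surjective with connected fibres, so the connected components of $\mathcal{X}^c_\mu(M)$ are in bijection with tuples $([\rho_1],\ldots,[\rho_n])$, which form a finite set by condition (C).

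The heart of the proof is a multiplicativity formula of the form
\[
    \tau_\mu(M;\rho) \;=\; C(c)\prod_{j=1}^{n}\tau_\mu(M_j;\rho_j),
\]
where $C(c)$ is a nonzero function of $c=\tr\rho(\mu)$ alone. I would derive this from the standard multiplicativity of Reidemeister torsion applied to the decomposition $M = M_1\cup_A\cdots\cup_A M_n$, where each gluing piece $A$ is an annular neighbourhood of a meridian. The twisted cohomology $H^\ast(A;\Ad)$ is supported on the one-dimensional $\Ad(\mu)$-invariant subspace of $\g$, and the refinement of $\tau_\mu$ on the positive-dimensional component furnished by Theorem \ref{thm:main0} is precisely what one needs in order to absorb the annular contributions into a universal factor depending only on the eigenvalues $m^{\pm 1}$ of $\rho(\mu)$, hence only on $c$. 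Crucially, Theorem \ref{thm:main0} ensures $\tau_\mu(M;\rho)$ is constant along the bending fibre, matching the fact that the right-hand side depends only on the tuple $([\rho_j])$.

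With the two ingredients above, Theorem \ref{thm:main} becomes a short algebraic manipulation. Choosing one representative on each component of $\mathcal{X}^c_\mu(M)$ as in the discussion after Question \ref{ques}, one obtains
\[
    \sum_{[\rho]\in\mathcal{X}^c_\mu(M)}\frac{1}{\tau_\mu(M;\rho)}
    \;=\; \frac{1}{C(c)}\prod_{j=1}^{n}\Bigl(\sum_{[\rho_j]\in\mathcal{X}^c_\mu(M_j)}\frac{1}{\tau_\mu(M_j;\rho_j)}\Bigr),
\]
and for generic $c$ each inner sum vanishes by hypothesis, so the whole expression vanishes.

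The main obstacle is establishing the multiplicativity formula: one must pin down the refinement of $\tau_\mu$ on the $(n-1)$-dimensional component in a way compatible with the chain-level gluing, and then verify that the correction factor from the annular joints truly depends only on $c$ and not on the chosen tuple. Once the Mayer--Vietoris setup is arranged with bases on each piece matched to the meridian basis on every joint simultaneously, the remaining linear algebra should be routine and should also confirm $C(c)\ne 0$ on the locus $c\ne\pm 2$ where condition (C) applies. A secondary subtlety is to restrict to the open dense set of tuples where all restrictions $\rho_j$ remain irreducible, so that each $\tau_\mu(M_j;\rho_j)$ is well-defined in the sense of \cite{porti1997torsion, dubois2003torsion}; this exclusion is harmless for a generic-$c$ statement.
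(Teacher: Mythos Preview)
Your overall strategy---multiplicativity of $\tau_\mu$ plus a description of the components of $\mathcal{X}_\mu^c(M)$ via restriction---is exactly the paper's, and the constant you call $C(c)$ is $(c^2-4)^{n-1}=(m-m^{-1})^{2n-2}$. But there is a genuine gap in your combinatorics.

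The restriction map does \emph{not} land in $\prod_j \mathcal{X}_\mu^c(M_j)$. An irreducible $\rho:\pi_1(M)\to\G$ need only have \emph{one} irreducible restriction $\rho_j$; the remaining restrictions may well be the abelian representation $\alpha_j$ with $\tr\alpha_j(\mu)=c$. This is not a codimension-one accident that goes away for generic $c$: for \emph{every} generic $c$ and every proper subset $J\subsetneq\{1,\ldots,n\}$, the tuples with $\rho_j=\alpha_j$ for $j\in J$ and $\rho_j$ irreducible for $j\notin J$ give honest connected components of $\mathcal{X}_\mu^c(M)$. Your remark that ``this exclusion is harmless for a generic-$c$ statement'' conflates genericity in $c$ with genericity in $\mathcal{X}_\mu^c(M)$; the latter is not what the conjecture asks for.

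Consequently your displayed factorisation
\[
\sum_{[\rho]\in\mathcal{X}^c_\mu(M)}\frac{1}{\tau_\mu(M;\rho)}
=\frac{1}{C(c)}\prod_{j=1}^n\sum_{[\rho_j]\in\mathcal{X}^c_\mu(M_j)}\frac{1}{\tau_\mu(M_j;\rho_j)}
\]
is false: the left side runs over strictly more components than the right side accounts for. The correct target of the restriction is $\prod_j\bigl(\mathcal{X}_\mu^c(M_j)\sqcup\{[\alpha_j]\}\bigr)\setminus\{([\alpha_1],\ldots,[\alpha_n])\}$, and the multiplicativity formula (which the paper proves with the abelian torsion $\tau_\mu(M_j;\alpha_j)$ defined separately) gives
\[
\frac{1}{C(c)}\sum_{[\rho]}\frac{1}{\tau_\mu(M;\rho)}
=\prod_{j=1}^n\Bigl(\sum_{[\rho_j]\in\mathcal{X}_\mu^c(M_j)}\frac{1}{\tau_\mu(M_j;\rho_j)}+\frac{1}{\tau_\mu(M_j;\alpha_j)}\Bigr)
-\prod_{j=1}^n\frac{1}{\tau_\mu(M_j;\alpha_j)}.
\]
Expanding, every surviving term contains at least one factor $\sum_{[\rho_j]\in\mathcal{X}_\mu^c(M_j)}1/\tau_\mu(M_j;\rho_j)$, which vanishes by hypothesis. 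So the conclusion is salvageable, but only after you (i) define $\tau_\mu(M_j;\alpha_j)$ for the abelian restrictions, (ii) extend the multiplicativity formula to cover mixed abelian/irreducible tuples, and (iii) redo the final algebra as above.

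A minor point: the paper carries out the Mayer--Vietoris computation using the JSJ decomposition $M=Y\cup M_1\cup\cdots\cup M_n$ with the composing space $Y$, rather than your iterated annular gluing; this makes the torsion of the ``joint'' piece a single clean computation yielding $(-1)^{n-1}(m-m^{-1})^{2n-2}$.
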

	It is proved in \cite{yoon2020adjoint} that every hyperbolic two-bridge knot satisfies the equation \eqref{eqn:conj} for a meridian. We thus obtain the following corollary.
	\begin{corollary} \label{cor}
		 The knot exterior of the connected sum of hyperbolic two-bridge knots  satisfies the equation \eqref{eqn:conj} for a meridian.
	\end{corollary}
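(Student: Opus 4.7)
The plan is to deduce the corollary as a direct application of Theorem \ref{thm:main}, so the work reduces to verifying its two hypotheses for each summand $K_j$: (i) that $K_j$ satisfies the structural condition (C), and (ii) that the knot exterior of $K_j$ satisfies the vanishing identity \eqref{eqn:conj}. Once both are in hand, Theorem \ref{thm:main} applied to $K = K_1 \# \cdots \# K_n$ immediately yields \eqref{eqn:conj} for $M$ with respect to a meridian.

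For (ii), I would simply invoke the result of \cite{yoon2020adjoint} recalled just before the corollary statement: every hyperbolic two-bridge knot satisfies \eqref{eqn:conj} with respect to a meridian. Nothing further is needed on that front.

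The substantive step is verifying (C) for a hyperbolic two-bridge knot $K_j$. The plan is: first, recall that two-bridge knots are small (their exteriors contain no closed essential surface), so by \cite{cooper1994plane} every component of $\mathcal{X}(M_j)$ is $1$-dimensional; then the trace function of the meridian $\mu$ is a non-constant regular map from each component to $\mathbb{C}$, hence has finite fibers, so $\mathcal{X}_\mu^c(M_j)$ is finite for every $c$. Next, using the explicit Riley-type parametrization of irreducible $\mathrm{SL}_2(\mathbb{C})$-characters of two-bridge knots by the pair $(\tr\rho(\mu), \tr\rho(\mu'))$ where $\mu'$ is the second meridian generator, I would argue that the locus where the character fails to be $\mu$-regular, or where $\tr\rho(\lambda) = \pm 2$ (with $\lambda$ the canonical longitude), is a proper algebraic subset of each component and hence is avoided by generic $c$; this is the standard genericity statement used in the context of two-bridge knots. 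This is the main technical point, but it is essentially a bookkeeping exercise over the finitely many components of $\mathcal{X}(M_j)$, using that outside a finite set of $c$-values the fiber $\mathcal{X}_\mu^c(M_j)$ lies in the smooth $\mu$-regular locus with nondegenerate longitude trace.

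With (C) and (ii) established for every summand $K_j$, Theorem \ref{thm:main} applies verbatim to $K$ and delivers the vanishing identity \eqref{eqn:conj} for $M$. The only place where care is required is ensuring that the ``generic $c$'' in Theorem \ref{thm:main} can be chosen simultaneously avoiding the (finitely many) bad $c$-values for each $M_j$, which is automatic since a finite union of finite exceptional sets is still finite.
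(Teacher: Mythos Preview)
Your approach is correct and is essentially the paper's own: the corollary is stated as an immediate consequence of Theorem~\ref{thm:main} together with the result of \cite{yoon2020adjoint}, with condition~(C) for two-bridge knots simply asserted (see the sentence ``For example, one may choose $K_j$ as a two-bridge knot or a torus knot'' preceding Theorem~\ref{thm:main0}). If anything, you supply more justification for~(C) than the paper does.
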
	
	
	\begin{remark} Conjecture \ref{conj:main} was derived from the 3d-3d correspondence under 
		the assumption that the interior of $M$ admits a hyperbolic structure. We refer to  \cite[Section 3]{gang2019adjoint} for details.
		It fails without the assumption since torus knot exteriors do not satisfy the equation \eqref{eqn:conj}. However, Theorem \ref{thm:main} and Corollary \ref{cor} suggest that one can relax the hyperbolicity condition, as the connected sum of knots is never hyperbolic.	
	\end{remark}

	The paper is organized as follows. In Section \ref{sec:2},  we briefly recall  basic definitions on the sign-refined Reidemeister torsion. 
	We define the adjoint Reidemeister torsion for the connected sum of knots in Sections  \ref{sec:connector} and \ref{sec:gluing}, and prove Theorems \ref{thm:main0} and \ref{thm:main} in Section \ref{sec:3.3}.

	\section{Review on the sign-refined Reidemeister torsion} \label{sec:2}
	\subsection{The Reidemeister torsion of a chain complex}
	Let $C_\ast$ be a chain complex of vector spaces over a field $\Fbb$
	\[C_\ast=(0 \rightarrow C_n \overset{\mkern-5mu\partial_n}{\longrightarrow}  \cdots \longrightarrow C_1 \overset{\mkern-5mu\partial_1}{\longrightarrow} C_0\rightarrow 0)\]
	and $H_\ast(C_\ast)$ be the homology of $C_\ast$.
	For a basis $c_\ast$ of $C_\ast$ and a basis  $h_\ast$  of $H_\ast(C_\ast)$ the \emph{Reidemeister torsion}  is defined as follows. 
	Here and throughout the paper, every basis and tuple is assumed to be ordered.
	For $0 \leq i \leq n$ we choose a lift $\widetilde{h}_i$ of $h_i$ to $C_i$ and a tuple $b_i$ of vectors in $C_i$ such that $\partial_i b_i$ is a basis of $\partial_i C_i$.
	Then the tuple $c'_i=(\partial_{i+1} b_{i+1}, \widetilde{h_i}, b_i)$ is another basis of $C_i$.
	Letting $A_i$ be the basis transition matrix taking $c_i$ to $c'_i$,  we have
	\begin{equation*}
		\mathrm{tor}(C_\ast,c_\ast, h_\ast) = \prod_{i=0}^n  \det A_i^{(-1)^{i+1}} \in \Fbb^\ast.
	\end{equation*}
Also, the \emph{sign-refined Reidemeister torsion} is defined as
	\begin{equation*}
	\mathrm{Tor}(C_\ast,c_\ast, h_\ast) =  (-1)^{|C_\ast|}\, \mathrm{tor}(C_\ast,c_\ast,h_\ast) \in \Fbb^\ast, 	\quad	|C_\ast|=\sum_{i=0}^n \alpha_i(C_\ast) \beta_i(C_\ast)
	\end{equation*}
	where $\alpha_i(C_\ast) = \sum_{j=0}^i \dim C_j$ and $\beta_i(C_\ast) = \sum_{j=0}^i \dim H_j(C_\ast)$.

	Suppose that we have a short exact sequence of chain complexes
	\begin{equation}  \label{eqn:ses}
	0\rightarrow C'_\ast \rightarrow C_\ast \rightarrow C_\ast''\rightarrow 0
	\end{equation}
	with  bases  $c _\ast, c_\ast',$ and $c_\ast''$ of $C_\ast, C_\ast'$, and $C_\ast''$, respectively.	
	It is proved in \cite[Lemma 3.4.2]{turaev1986reidemeister} that if $c_\ast, c'_\ast,$ and $c''_\ast$ are \emph{compatible} with respect to the sequence \eqref{eqn:ses}, i.e.,  $c_\ast = (c'_\ast, c''_\ast)$, then
	\begin{equation} \label{eqn:gluing}
		\mathrm{Tor}(C_\ast,c_\ast,h_\ast) = (-1)^{v+u}\, \mathrm{Tor}(C'_\ast,c'_\ast,h'_\ast)\,\mathrm{Tor}(C''_\ast,c''_\ast,h''_\ast)\,\mathrm{tor}(\mathcal{H})
	\end{equation}
	where $h_\ast, h_\ast',$ and $h_\ast''$ are bases of $H_\ast(C_\ast), H_\ast(C_\ast')$, and $H_\ast(C_\ast'')$, respectively.
	 Here
	 \begin{align}
	 	\label{eqn:v} v & = \sum_{i} \alpha_{i-1}(C'_\ast) \alpha_i(C''_\ast), \\
	 	\label{eqn:u} u & = \sum_{i} \left( (\beta_i(C_\ast)+1)(\beta_i(C'_\ast)+\beta_i(C''_\ast)) + \beta_{i-1}(C'_\ast) \beta_i(C''_\ast) \right),
	 \end{align} and
	$\mathrm{tor}(\mathcal{H})$ is the Reidemeister torsion of the long exact sequence induced from \eqref{eqn:ses} with respect to $h_\ast, h'_\ast$, and $h_\ast''$. 
	 We refer to \cite{turaev1986reidemeister,turaev2002torsions} for details.

	\subsection{The adjoint Reidemeister torsion of a CW-complex} \label{sec:cw}
	
	Let $\g$ be the Lie algebra of $\G$ and fix a basis of $\g$ as 
	\begin{equation*}
		e_1 = \begin{pmatrix} 0 & 1 \\ 0 & 0\end{pmatrix},\ 
		e_2 = \begin{pmatrix} 1 & 0 \\ 0 & -1\end{pmatrix},\ 
		e_3 = \begin{pmatrix} 0 & 0 \\ 1 & 0\end{pmatrix}.
	\end{equation*}
	Note that the Killing form $\langle \cdot, \cdot \rangle$ on $\g$ is given by
	\begin{equation*}
		\left\langle \begin{pmatrix} b & a \\ c & -b \end{pmatrix}, \begin{pmatrix} b' & a' \\ c' & -b' \end{pmatrix}  \right\rangle = 8  b b' +4 a c'+4 ca'.
	\end{equation*}			
	Let $X$ be a finite CW-complex and $\rho:\pi_1(X)\rightarrow \G$ be a representation. 
	We consider a cochain complex 
	\[ C^\ast(X;\g_\rho) = \mathrm{Hom}_{\Zbb[\pi_1 \mkern-2mu X]} \left(C_\ast(\widetilde{X};\Zbb), \g\right)\]
	where  $\widetilde{X}$ is the universal cover of $X$.  Here $\g $ is viewed as a $\Zbb[\pi_1 (X)]$-module through the adjoint action $\mathrm{Ad}\rho : \pi_1(X)\rightarrow \mathrm{Aut}(\g)$ associated to $\rho$. 
	We denote the cohomology   of $C^\ast(X;\g_\rho)$  by  $H^\ast(X;\g_\rho)$ and call it the \emph{twisted cohomology}.  Note that  $H^0(X;\g_\rho)$ coincides with the set of invariant vectors in $\g$ under the $\pi_1(X)$-action.
	
	Let $c_1,\ldots,c_n$ be all the cells of $X$ and fix their order by $c_X=(c_1,\ldots,c_n)$. 
	We assume that each cell $c_i$ has a preferred orientation and a lift $\widetilde{c}_i$ to $\widetilde{X}$.
	We define an element ${c}_i^{(k)}\in C^\ast(X;\g_\rho)$ for $1 \leq i\leq n$ and $1 \leq k \leq 3$ by assigning  $\widetilde{c}_i$ to $e_k$ and every cell of $\widetilde{X}$ that is not a lift of $c_i$ to $0$.  Then  the tuple
	\[\mathbf{c}_X=\left(c_1^{(1)},c_1^{(2)},c_1^{(3)}, \ldots, c_n^{(1)},c_n^{(2)},c_n^{(3)}\right)\]  is a basis of $C^\ast(X;\g_\rho)$. We refer to it as the \emph{geometric basis}.

	Let $C_\ast(X;\Rbb)$ be the ordinary chain complex of $X$ with the real coefficient.
	Note that the tuple $c_X$ is a basis of $C_\ast(X;\Rbb)$.
	For  an orientation  $o_X$ of  the $\Rbb$-vector space $H_\ast(X;\Rbb)$ we define
	\[\epsilon(o_X) = \mathrm{sgn} \left(\mathrm{Tor}(C_\ast(X;\Rbb), c_X, h_X) \right) \in \{ \pm 1\}\]
	where $h_X$ is any basis of $H_\ast(X;\Rbb)$ positively oriented with respect to $o_X$
	and $\mathrm{sgn}(x)$ is the sign of $x\in\Rbb^\ast$.
	
	\begin{definition}
		For a basis $\mathbf{h}_X$ of $H^\ast(X;\g_\rho)$ and an orientation $o_X$ of $H_\ast(X;\Rbb)$ the \emph{adjoint Reidemeister torsion} is defined as
	\[\tau(X;\rho,\mathbf{h}_X,o_X)= \epsilon(o_X) \cdot \mathrm{Tor} \left( C^\ast(X;\g_\rho), \mathbf{c}_X,\mathbf{h}_X \right) \in \Cbb^\ast.\]
	The above definition does not depend on the order, orientations, and lifts of $c_i$'s.  Moreover, it does not depend on the choice of a basis of $\g$ if the Euler characteristic of $X$ is zero
	\end{definition}
	Note that every notion in this section associated to $\rho$  is invariant under conjugating $\rho$ up to an appropriate isomorphism. In particular, the adjoint Reidemeister torsion is invariant under the conjugation.

\begin{example} \label{exam:torus}
	Let $\Sigma$ be a 2-torus with a usual CW-structure: one 0-cell $p$, two 1-cells $\mu$ and $\lambda$, and one 2-cell $\Sigma$ as in Figure \ref{fig:torus}~(left). We choose their lifts (to the universal cover of $\Sigma$) as in Figure \ref{fig:torus}~(right)
	and fix an order of the cells by $c_\Sigma=(p,\mu,\lambda,\Sigma)$. 
	Let $o_\Sigma$ be the orientation of $H_\ast(\Sigma;\Rbb)$ induced  from $c_\Sigma$ so that $\epsilon(o_\Sigma) = 1$.
	\begin{figure}[!h]
		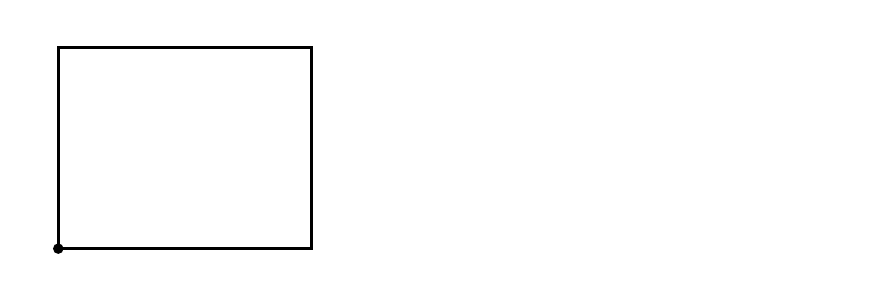
		\caption{The cells of a 2-torus and their lifts.}
		\label{fig:torus}
	\end{figure}

Let $\rho:\pi_1(\Sigma)\rightarrow \G$ be a representation with $\tr \rho(\mu)\neq \pm2$. Up to conjugation we have
	\[\rho(\mu)=\begin{pmatrix}	
		m & 0 \\
		0 & m^{-1}
	\end{pmatrix}, \quad \rho(\lambda)=\begin{pmatrix}
		l & 0 \\
		0 & l^{-1}
	\end{pmatrix}\]
	for some $m \neq \pm1$ and $l \in \Cbb^\ast$. 
	With respect to the geometric basis, the boundary maps  $\delta^0 : C^0(\Sigma;\g_\rho) \rightarrow C^1(\Sigma;\g_\rho)$ and $\delta^1 : C^1(\Sigma;\g_\rho) \rightarrow C^2(\Sigma;\g_\rho)$  are given by
	\[ \delta^0 = \begin{pmatrix}  
		\mathrm{Ad}\rho(\mu) - I_3 \\
		 \mathrm{Ad}\rho(\lambda)-I_3
	\end{pmatrix}, \quad 
	\delta^1 = \begin{pmatrix}
		\mathrm{Ad}\rho(\lambda) - I_3 &  I_3 - \mathrm{Ad}\rho(\mu)
		\end{pmatrix}.
	\]
	Here $I_k$ is the  identity matrix of size $k$.
	It follows that $ \dim H^i(\Sigma;\g_\rho)=1$ for $i=0,2$, $\dim H^i(\Sigma;\g_\rho)=2$ for $i=1$, and $\dim H^i(\Sigma;\g_\rho)=0$ otherwise.
	Let $P=\frac{1}{8} e_2 \in H^0(\Sigma;\g_\rho)$ and define maps
		\begin{align*}
	& \psi^0 : C^0(\Sigma;\g_\rho) \rightarrow \Cbb,\ \alpha \mapsto \langle \alpha(\widetilde{p}), P \rangle,\\
	& \psi^1 : C^1(\Sigma;\g_\rho) \rightarrow \Cbb^2, \ \alpha \mapsto \left(\langle \alpha(\widetilde{\mu}), P \rangle,\, \langle \alpha(\widetilde{\lambda}), P \rangle \right),\\
	& \psi^2 : C^2(\Sigma;\g_\rho) \rightarrow \Cbb,  \ \alpha \mapsto \langle \alpha(\widetilde{\Sigma}), P \rangle.
	\end{align*}  
	One easily checks that each map $\psi^i$ induces
	an isomorphism $H^i(\Sigma;\g_\rho) \rightarrow \Cbb$ ($\Cbb^2$ if $i=1$). For simplicity we use the same notation $\psi^i$ for these isomorphisms.
	We choose a basis   $\mathbf{h}_\Sigma^i$  of $H^i(\Sigma;\g_\rho)$ by the pre-image of the standard basis of $\Cbb$   ($\Cbb^2$ if $i=2$) under  $\psi^i$. 
	Explicitly, we have  $\mathbf{h}_\Sigma^0 = p^{(2)}$, $\mathbf{h}_\Sigma^1 = ( \mu^{(2)} , \lambda^{(2)})$, and $\mathbf{h}_\Sigma^2 = \Sigma^{(2)}$.
	 Choosing a tuple $b^i$ of vectors in $C^i(\Sigma;\g_\rho)$ as $b^0 = (p^{(1)},p^{(3)})$, $ b^1 = (\lambda^{(1)},\lambda^{(3)})$, and $b^2 = \emptyset$, we obtain
	\begin{align*}
		\tau(\Sigma;\rho,\mathbf{h}_\Sigma,o_\Sigma) &= - 1 \cdot (m^2-1)(m^{-2}-1) \cdot (-(m^2-1)(m^{-2}-1))^{-1} =1.
	\end{align*}
	Note that a different choice of $P \in H^0(\Sigma;\g_\rho)$ changes the basis $\mathbf{h}_\Sigma$ but still we have $\tau(\Sigma;\rho,\mathbf{h}_\Sigma,o_\Sigma)=1$.
	\end{example}

	\subsection{The adjoint Reidemeister torsion of a knot exterior} \label{sec:knot}

	Let $M$ be the knot exterior of a knot $K \subset S^3$ with any given triangulation.
	It is well-known that $\dim H_i(M;\Rbb)=1$ for $i=0,1$ and  $\dim H_i(M;\Rbb)=0$ otherwise. 
	We choose  the orientation $o_M$ of $H_\ast(M;\Rbb)$ induced from a basis $h_M=(pt, \mu)$ of $H_\ast(M;\Rbb)$ where $pt$ is a point in $M$ and $\mu$ is a meridian of $K$ oriented arbitrarily. 
	
	Let $\rho:\pi_1(M)\rightarrow \G$ be a representation of the knot group.
	For the sake of simplicity, we assume that 
	\[ m \neq \pm1 \textrm{ and } \Delta_K(m^2) \neq 0\]
	where $m$ is an eigenvalue of $\rho(\mu)$ and $\Delta_K$ is the Alexander polynomial of $K$. 
	It follows that if $\rho$ is reducible, then it should be abelian (see e.g. \cite{burde2013knots}). Therefore,  $\rho$ is either irreducible (Section \ref{sec:irred}) or abelian (Section \ref{sec:red}).
	
	\subsubsection{Irreducible representations} \label{sec:irred}
	Suppose that $\rho$ is irreducible. In this case we further assume that $\rho$ is  $\mu$-regular \cite[Definition 3.21]{porti1997torsion}, i.e., $\dim H^1(M;\g_\rho) =1$ and 
 the inclusion $\mu \hookrightarrow M$ induces an injective map $H^1(M;\g_\rho) \rightarrow H^1(\mu;\g_\rho)$.
	We choose an element $P \in H^0(\Sigma;\g_\rho)$, where $\Sigma = \partial M$, and define maps
	\begin{align*}
	& \psi^1 : C^1(M;\g_\rho) \rightarrow \Cbb, \ \alpha \mapsto \langle \alpha(\widetilde{\mu}), P \rangle, \\
	& \psi^2 : C^2(M;\g_\rho) \rightarrow \Cbb,  \ \alpha \mapsto \langle \alpha(\widetilde{\Sigma}), P \rangle,
	\end{align*}  
	where $\widetilde{\mu}$ and $\widetilde{\Sigma}$ are lifts of $\mu$ and $\Sigma$ (to the universal cover of $M$) respectively satisfying  $\widetilde{\mu} \subset \widetilde{\Sigma}$. Here the boundary torus $\Sigma$ is oriented as in Stokes' theorem.
	It is proved in \cite{porti1997torsion} that the $\mu$-regularity implies that $\psi^i$ induces an isomorphism $H^i(M;\g_\rho) \rightarrow \Cbb$ for $i=1,2$.  We define
	\begin{equation}\label{eqn:torog}
		\tau_\mu(M;\rho) = \tau(M;\rho,\mathbf{h}_M,o_M) 
	\end{equation}
	where $\mathbf{h}_M^i$ is a basis of $H^i(M;\g_\rho)$ given by the pre-image of the standard basis of $\Cbb$ under $\psi^i$.
	Note that a different choice of $P \in H^0(\Sigma;\g_\rho)$ changes the basis $\mathbf{h}_M$ but not the value of $\tau_\mu(M;\rho)$.

	\subsubsection{Abelian representations}  \label{sec:red}
	Suppose that $\rho$ is abelian. This case might not be that interesting, as it essentially reduces to the case of Alexander polynomial. We however present explicit setups  here for Section \ref{sec:3}. 
	
	\begin{lemma} \label{lem:abel}
		We have $\dim H^i(M;\g_\rho)=1$ for $i=0,1$ and $\dim H^i(M;\g_\rho)=0$ otherwise.
	\end{lemma}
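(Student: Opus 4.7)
The plan is to exploit the fact that an abelian representation factors through $H_1(M;\Zbb)\cong \Zbb$, so up to conjugation we may assume
$$\rho(\mu) = \begin{pmatrix} m & 0 \\ 0 & m^{-1}\end{pmatrix},$$
and every element $\gamma \in \pi_1(M)$ is sent to a diagonal matrix whose $(1,1)$-entry is $m^{\ell(\gamma)}$ where $\ell:\pi_1(M)\to\Zbb$ is the abelianization map (linking number with $K$).

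With this normalization, the basis $e_1,e_2,e_3$ of $\g$ fixed in Section \ref{sec:cw} diagonalizes the adjoint action simultaneously: $\Ad\rho(\gamma)$ acts on $e_1,e_2,e_3$ as multiplication by $m^{2\ell(\gamma)}, 1, m^{-2\ell(\gamma)}$ respectively. Hence as a $\Zbb[\pi_1(M)]$-module, $\g$ splits as
$$\g \;\cong\; \Cbb_{m^2} \oplus \Cbb \oplus \Cbb_{m^{-2}},$$
where $\Cbb_{t}$ denotes the one-dimensional local system on $M$ on which $\mu$ acts by $t$. Since cohomology commutes with direct sums of coefficient modules,
$$H^\ast(M;\g_\rho) \;\cong\; H^\ast(M;\Cbb_{m^2}) \oplus H^\ast(M;\Cbb) \oplus H^\ast(M;\Cbb_{m^{-2}}).$$

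Next I would compute each summand. For the trivial local system, $M$ is a knot exterior, so $H^0(M;\Cbb)=H^1(M;\Cbb)=\Cbb$ and $H^i(M;\Cbb)=0$ otherwise. For the nontrivial summands, I would invoke the standard identification of twisted cohomology with the Alexander module: for $t\in\Cbb^\ast$ with $t\neq 1$, one has $H^0(M;\Cbb_t)=0$ and $H^2(M;\Cbb_t)=0$ (Poincar\'e duality with the boundary, using that the abelian cover is acyclic away from $t=1$), and $H^1(M;\Cbb_t)=0$ iff $\Delta_K(t)\neq 0$. Under our hypotheses $m\neq\pm 1$ and $\Delta_K(m^2)\neq 0$, together with the symmetry $\Delta_K(t^{-1}) \doteq \Delta_K(t)$, both local systems $\Cbb_{m^{\pm 2}}$ are acyclic.

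Summing the contributions yields $\dim H^0(M;\g_\rho)=\dim H^1(M;\g_\rho)=1$ and $H^i(M;\g_\rho)=0$ for $i\neq 0,1$, as claimed. The only subtle step is the vanishing of $H^\ast(M;\Cbb_{m^{\pm 2}})$, which is standard but worth spelling out; everything else is bookkeeping from the weight space decomposition.
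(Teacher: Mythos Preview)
Your argument is correct and is a cleaner, more conceptual packaging of the same computation the paper does. The paper works directly with the Wirtinger 2-complex: it writes down the Fox--Jacobian boundary maps $\delta^0,\delta^1$ for the full $3$-dimensional coefficient module $\g$, observes that with $\rho$ diagonal the adjoint action is already diagonal in the basis $e_1,e_2,e_3$, and then reads off that $\delta^1$ is surjective precisely because $\Delta_K(1)\neq 0$ and $\Delta_K(m^{\pm 2})\neq 0$. Your weight--space decomposition $\g\cong \Cbb_{m^2}\oplus\Cbb\oplus\Cbb_{m^{-2}}$ makes this structure explicit up front and reduces everything to the classical fact that the rank-one local system $\Cbb_t$ on a knot exterior is acyclic iff $t\neq 1$ and $\Delta_K(t)\neq 0$. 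The two proofs are therefore essentially the same under the hood; what your organization buys is modularity and an immediate explanation of \emph{why} the three Alexander values appear, while what the paper's hands-on version buys is the explicit cocycle generators $\alpha(\widetilde p)=e_2$ and $\alpha(\widetilde g_i)=e_2$ recorded in \eqref{eqn:xbasis}, which are used later to set up the maps $\psi^0,\psi^1$. One small remark: your parenthetical justification ``Poincar\'e duality with the boundary'' for $H^2(M;\Cbb_t)=0$ is a bit terse; the most direct route (and the one implicit in the paper) is simply that on the Wirtinger 2-complex the top boundary map evaluated at $t$ has full rank exactly when $\Delta_K(t)\neq 0$.
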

	\begin{proof}
	We choose any Wirtinger presentation of  the knot group
	\[\pi_1(M) = \langle g_1, \ldots, g_n \, | \, r_1,\ldots,r_{n-1} \rangle.\]
	 Recall that the corresponding 2-dimensional cell complex $X$ consists of 
 one 0-cell $p$, $n$ 1-cells $g_1,\cdots,g_n$, and $n-1$ 2-cells $r_1,\cdots,r_{n-1}$.
	It is known that $X$ is simple homotopic equivalent to $M$ and thus we may use $X$ instead of $M$.
	We choose a lift  of the base point $p$  arbitrarily and the lifts of other cells accordingly.
	Then with respect to the geometric basis, the boundary maps $\delta^0 :C^0(X;\g_\rho) \rightarrow C^1(X;\g_\rho)$ and $\delta^1 :C^1(X;\g_\rho) \rightarrow C^2(X;\g_\rho)$ are given as
	\[\delta^0 = \begin{pmatrix}
		\Phi(g_1-1) \\
		\vdots\\
		\Phi(g_n-1)
	\end{pmatrix}, \quad \delta^1 =
 \begin{pmatrix}
		\Phi( \frac{\partial r_1}{\partial g_1}) & \cdots & \Phi( \frac{\partial r_1}{\partial g_n}) \\
		\vdots & \ddots & \vdots	\\
		\Phi(\frac{\partial r_{n-1}}{\partial g_1}) & \cdots & \Phi(\frac{\partial r_{n-1}}{\partial g_n})
	\end{pmatrix}
	\] where $\Phi$ is the $\Zbb$-linear extension of $\mathrm{Ad}\rho$ and $\partial r_j/\partial g_i$ denotes the Fox free differential.
	Recall that  up to conjugation
		\[\rho(g_1)=\cdots=\rho(g_n) = \begin{pmatrix} m & 0 \\ 0 & m^{-1} \end{pmatrix}, \  m \neq \pm 1.\]
		It is clear that $\mathrm{Im} \, \delta^0 \simeq \Cbb^2$, $\mathrm{Ker} \, \delta^0 \simeq \Cbb$ and $\dim H^0(X;\g_\rho)=1$.
		On the other hand, $\delta^1$ is surjective  since $\Delta_K(1) \neq 0$ and $\Delta_K(m^{\pm2}) \neq 0$.
		 It follows that   $\dim H^2(X;\g_\rho)=0$ and   $\dim H^1(X;\g_\rho) =1$ since the Euler characteristic of $X$ is zero.
		Explicitly, the twisted cohomology of $X$ is generated by
		\begin{equation} \label{eqn:xbasis}
			C^0(X;\g_\rho) \ni \alpha \textrm{ s.t. } \alpha(\widetilde{p})=e_2, \quad C^1(X;\g_\rho) \ni \alpha  \textrm{ s.t. }  \alpha(\widetilde{g_i})=e_2 \ \forall i. 
		\end{equation}
	\end{proof}

	Once again, we choose an element $P \in H^0(\Sigma;\g_\rho)=H^0(M;\g_\rho)$ and define
		\begin{align*}
		& \psi^0 : C^0(M;\g_\rho) \rightarrow \Cbb, \ \alpha \mapsto \langle \alpha(\widetilde{p}), P \rangle, \\
		& \psi^1 : C^1(M;\g_\rho) \rightarrow \Cbb,  \ \alpha \mapsto \langle \alpha(\widetilde{\mu}), P \rangle,
	\end{align*}  
	where $\widetilde{p}$ and $\widetilde{\mu}$ are lifts of $p$ and $\mu$  (to the universal cover of $M$) respectively satisfying  $\widetilde{p} \subset \widetilde{\mu}$.
	It is clear from the equation \eqref{eqn:xbasis} that  $\psi^i$ induces an isomorphism $H^i(M;\g_\rho) \rightarrow \Cbb$ for $i=0,1$.
	We define
	\begin{equation*}
		\tau_\mu(M;\rho) = \tau(M;\rho,\mathbf{h}_M,o_M) 
	\end{equation*}
	where $\mathbf{h}_M^i$ is a basis of $H^i(M;\g_\rho)$ given by the pre-image of the standard basis of $\Cbb$ under $\psi^i$.
	In fact, one can compute that
	\[\tau_\mu(M;\rho)=  \frac{\Delta_K(m^2) \Delta_K(m^{-2})}{(m-m^{-1})^2}\]
	up to sign, but we would not use this fact in this paper.
	
		\section{The connected sum of knots} \label{sec:3}
	
		Let $K$ be the connected sum of knots $K_1,\ldots,K_n$ in $S^3$. 
		We denote by $M$ and $M_j$ the knot exteriors of $K$ and $K_j$, respectively.
		It is known that the JSJ decomposition of $M$ consists of a composing space and $M_1,\ldots,M_n$.

		\subsection{A composing space} \label{sec:connector}
		Let $D_1,\ldots,D_n$ be mutually disjoint discs in the interior of a disc $D^2$ and 
		let $W= D^2 \setminus \mathrm{int}(D_1 \sqcup \cdots \sqcup D_{n})$ be a planar surface.
		Here $\mathrm{int}(X)$ denotes the interior of $X$.
		A \emph{composing space} $Y$  is a compact 3-manifold $W\times S^1$ having $n+1$ boundary tori $\Sigma_j=\partial D_j \times S^1$ $(1\leq j \leq n)$ and  $\Sigma = \partial D^2 \times S^1$.
		Letting $\mu= \{pt\} \times S^1$ and $\lambda_j = \partial D_j \times \{ pt \}$, we have
		\begin{equation*}
			\pi_1(Y)=\langle \mu,\lambda_1, \ldots, \lambda_{n} \,|\, [\mu,\lambda_1]=\cdots= [\mu,\lambda_{n}]=1 \rangle.			
		\end{equation*}
		One can check that $H_0(Y;\Rbb) \simeq \Rbb$ is generated by a point $p \in Y$, $H_1(Y;\Rbb)\simeq \Rbb^{n+1}$ is generated by $\mu,\lambda_1,\ldots,\lambda_n$, and $H_2(Y;\Rbb) \simeq \Rbb^n$ is generated by $\Sigma_1,\ldots,\Sigma_n$. 
		We choose the orientation  $o_Y$ of $H_\ast(Y;\Rbb)$ induced from a basis $h_Y=(p,\mu,\lambda_1,\ldots,\lambda_n, \Sigma_1,\ldots,\Sigma_n)$ of $H_\ast(Y;\Rbb)$.
		Here we orient $\mu$, $\lambda_j$, and $\Sigma_j$  as in   Example \ref{exam:torus} and Stokes' theorem.

		Let  $\rho : \pi_1(Y) \rightarrow \G$ be a representation with $\tr \rho(\mu) \neq \pm 2$ and $\tr \rho(\lambda_j) \neq \pm 2$ for some $1 \leq j\leq n$. Since $\mu$ commutes with all $\lambda_j$'s, we have up to conjugation 
		\begin{equation}\label{eqn:diag}
			\rho(\mu) =\begin{pmatrix} m & 0 \\ 0 & m^{-1} \end{pmatrix}, \quad \rho(\lambda_j) = \begin{pmatrix} l_j & 0 \\ 0 & l_j^{-1}\end{pmatrix} 
		\end{equation}
		for some $m \neq \pm1 $ and $l_j \in \Cbb^\ast$. Note that there is no relation among $m$, $l_1,\ldots,l_n$.
		\begin{proposition} \label{lem:Yh}
			We have
			\begin{equation}
				\dim H^i(Y;\g_\rho) = \left \{ 
				\begin{array}{ll}
					1 &i=0,\\
					n+1 & i=1,\\
					n & i=2,\\
					0 & otherwise.
				\end{array}
				\right.
			\end{equation}
		\end{proposition}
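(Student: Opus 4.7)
The plan is to replace $Y$ by a 2-dimensional model and then exploit the diagonal form of $\rho$. Since the planar surface $W$ deformation retracts onto a wedge of $n$ circles (one encircling each $D_j$), the product $Y=W\times S^1$ is homotopy equivalent to the presentation 2-complex $X$ of $\pi_1(Y)$: a single 0-cell $p$, the $n+1$ 1-cells $\mu,\lambda_1,\ldots,\lambda_n$, and $n$ 2-cells attached along $r_j=[\mu,\lambda_j]$. Twisted cohomology is a homotopy invariant, so it is enough to compute $H^\ast(X;\g_\rho)$. Following the Fox-calculus recipe used in the proof of Lemma~\ref{lem:abel}, the geometric basis makes $\delta^0$ and $\delta^1$ explicit; because $\rho(\mu)$ commutes with each $\rho(\lambda_j)$ one has $\mathrm{Ad}\rho(\mu\lambda_j\mu^{-1})=\mathrm{Ad}\rho(\lambda_j)$, and the only nonzero $3\times 3$ blocks of $\delta^1$ are $I-\mathrm{Ad}\rho(\lambda_j)$ in the $\mu$-column and $\mathrm{Ad}\rho(\mu)-I$ in the $\lambda_j$-column of the row indexed by $r_j$; likewise $\delta^0$ is the column stack of the blocks $\mathrm{Ad}\rho(\mu)-I$ and $\mathrm{Ad}\rho(\lambda_j)-I$.

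Next I would use the simultaneous diagonalisation~\eqref{eqn:diag}. On the basis $(e_1,e_2,e_3)$ of $\g$ both $\mathrm{Ad}\rho(\mu)$ and every $\mathrm{Ad}\rho(\lambda_j)$ act diagonally with eigenvalues $(m^2,1,m^{-2})$ and $(l_j^2,1,l_j^{-2})$, so every block appearing in $\delta^0$ and $\delta^1$ annihilates $e_2$ and preserves $\langle e_1,e_3\rangle$. The hypothesis $m\neq\pm 1$ makes $\mathrm{Ad}\rho(\mu)-I$ a bijection on $\langle e_1,e_3\rangle$; in particular $\ker\delta^0=\langle e_2\rangle$ and $\dim H^0(Y;\g_\rho)=1$. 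To compute $H^2$ I would analyse the image of $\delta^1$ block by block: in each of the $n$ target copies of $\g$ the $e_2$-line is unreachable (both $I-\mathrm{Ad}\rho(\lambda_j)$ and $\mathrm{Ad}\rho(\mu)-I$ vanish on $e_2$), while the $e_1$- and $e_3$-lines are already surjected onto by the single block $\mathrm{Ad}\rho(\mu)-I$ sitting in the $\lambda_j$-column. Hence $\mathrm{coker}\,\delta^1$ is exactly the direct sum of the $n$ surviving $e_2$-lines and $\dim H^2(Y;\g_\rho)=n$.

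The remaining value $\dim H^1(Y;\g_\rho)=n+1$ follows from $\chi(X)=1-(n+1)+n=0$ together with the just-computed $H^0$ and $H^2$; and $H^i(Y;\g_\rho)=0$ for $i\geq 3$ simply because $X$ is 2-dimensional. The only subtle point in this plan is the rank computation for $\delta^1$: one must carefully decouple the $e_2$-component, where the entire map vanishes and contributes the $n$ classes of $H^2$ geometrically supported on the boundary tori $\Sigma_1,\ldots,\Sigma_n$, from the $\langle e_1,e_3\rangle$-component, where the diagonal blocks $\mathrm{Ad}\rho(\mu)-I$ alone suffice for surjectivity regardless of the values of $l_1,\ldots,l_n$ (in particular the statement needs nothing more than $m\neq\pm 1$, and no genericity on the $l_j$).
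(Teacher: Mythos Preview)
Your proof is correct and takes a genuinely different route from the paper's. The paper first computes $H^\ast(W;\g_\rho)$ (here the hypothesis that some $l_j\neq\pm 1$ is actually used, to get $\dim H^0(W;\g_\rho)=1$), then decomposes $Y$ as two copies of $W\times I$ glued along $W\sqcup W$ and runs Mayer--Vietoris; the maps $g_0,g_1$ in the resulting long exact sequence~\eqref{eqn:les} are written down explicitly and their ranks are read off. Your approach instead passes directly to the presentation $2$-complex $X\simeq V\times S^1$ and reads the dimensions from the Fox-calculus matrices; this is shorter, and as you correctly observe it needs only $m\neq\pm 1$, so it in fact establishes a slightly stronger statement than what is asserted. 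The paper's longer route is not wasted effort, however: the Mayer--Vietoris sequence $\mathcal H$ and the explicit matrices for $g_0,g_1$ computed there are reused verbatim in the proof of Proposition~\ref{lem:YTor} to evaluate $\mathrm{tor}(\mathcal H)$ and hence the torsion $\tau(Y;\rho,\mathbf{h}_Y,o_Y)$, so the extra machinery pays off downstream.
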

		\begin{proof}
			We first compute the twisted cohomology of $W$. 	
			Since $W$ retracts to the wedge sum $V$ of $n$ circles $\lambda_1, \ldots,\lambda_n$ (with the basepoint $p$), we may consider $V$ instead of $W$:
			\[ \g \simeq C^0(V;\g_\rho) \overset{\delta^0}{\rightarrow} C^1(V;\g_\rho) \simeq \g^n, \quad \delta^0 = \begin{pmatrix}
				\Ad(\lambda_1)-I_3 \\
				 \vdots  \\
				 \Ad(\lambda_n)-I_3 
				\end{pmatrix}. \]
			From the equation \eqref{eqn:diag} with the fact that $\tr \rho(\lambda_j) \neq \pm 2$  for some $1 \leq j \leq n$, we have
			\begin{equation}
				\dim H^i(W;\g_\rho) = \dim H^i(V;\g_\rho) = \left \{ 
				\begin{array}{ll}
					1 &i=0,\\
					3n-2 & i=1,\\
					0 & otherwise.
				\end{array}
				\right.
			\end{equation}  
			Without loss of generality, we assume that $l_1  \neq  \pm 1$ and choose a basis $\mathbf{h}^i_W$ of $H^i(W;\g_\rho)$ as   
			\[\mathbf{h}^0_W = p^{(2)}, \quad \mathbf{h}^1_W = (\underbrace{\lambda_1^{(2)},\ldots,\lambda_n^{(2)}}_{n},\ \underbrace{\lambda_2^{(1)},\ldots,\lambda_n^{(1)}}_{n-1},\ \underbrace{\lambda_2^{(3)},\ldots,\lambda_n^{(3)}}_{n-1}).\]
			Here we choose a lift of  $p$ arbitrarily and determines the lifts of  other cells accordingly. Recall Section \ref{sec:cw} that the notations $p^{(k)}$ and $\lambda_j^{(k)}$ make sense after we fix lifts of $p$ and $\lambda_j$.

			We decompose $Y$ into two copies $Y_1$ and $Y_2$ of $W \times I$ where $I$ is an interval. 
			It is clear that  both $Y_1$ and $Y_2$ retract to $W$ and $Y_1 \cap Y_2 = W \sqcup W$. From the short exact sequence 
		 \begin{equation} \label{eqn:sesY}
	0 \rightarrow C^\ast(Y;\g_\rho) \rightarrow C^\ast(Y_1;\g_\rho) \oplus C^\ast(Y_2;\g_\rho) \rightarrow  C^\ast(W;\g_\rho) \oplus C^\ast(W;\g_\rho)\rightarrow 0,
\end{equation}
			 we obtain
			\begin{align}
				\label{eqn:les}
				\mathcal{H}:  0 & \longrightarrow H^0(Y;\g_\rho) \overset{f_0}{\longrightarrow}   H^0(W;\g_\rho) \oplus H^0(W;\g_\rho) \overset{g_0}{\longrightarrow}  H^0(W;\g_\rho) \oplus H^0(W;\g_\rho) \\
				&  \overset{d_0}{\longrightarrow} H^1(Y;\g_\rho) \overset{f_1}{\longrightarrow}  H^1(W;\g_\rho) \oplus H^1(W;\g_\rho)  \overset{g_1}{\longrightarrow} H^1(W;\g_\rho)  \oplus H^1(W;\g_\rho)  \overset{d_1}{\longrightarrow} H^2(Y;\g_\rho) \longrightarrow 0. \nonumber
			\end{align}
			Fixing identifications $H^0(W;\g_\rho) \simeq \Cbb$ and $H^1(W;\g_\rho) \simeq \Cbb^{3n-2}$ with respect to $\mathbf{h}_W$,
			 the matrix expressions of $g_0$ and $g_1$ in the sequence \eqref{eqn:les} are given by
			\begin{equation} \label{eqn:gs}
				g_0 = \begin{pmatrix} 
					1 & -1\\
					-1 & 1
				\end{pmatrix}, \quad
				g_1= \begin{pmatrix}
					I_{3n-2} & \rvline & \begin{matrix} 
						-I_n & 0 & 0 \\
						0 & -m^2 I_{n-1} & 0 \\ 
						0& 0 &-m^{-2} I_{n-1}
					\end{matrix}     \\
					\hline
					-I_{3n-2}& \rvline  & I_{3n-2} 
				\end{pmatrix}
			\end{equation}
			where $I_k$ is the identity matrix of size $k$.
			In particular,  $\mathrm{Ker}\,g_0$ is generated by  $\mathfrak{e}_1 + \mathfrak{e}_2$ and $\mathrm{Ker}\,g_1 $ is generated by $\mathfrak{e}_1 + \mathfrak{e}_{3n-1},\ldots, \mathfrak{e}_n + \mathfrak{e}_{4n-1}$. Here $\mathfrak{e}_k$ is a unit vector whose coordinates are all zero, except one at the $k$-th coordinate.
			It follows that $\dim \mathrm{Im}\, g_0 =1 $, $\dim \mathrm{Im}\, g_1 = 5n-4$, and 
			\begin{align*}
				\dim H^0(Y;\g_\rho) &= \dim \mathrm{Im}\,f_0= \dim \mathrm{Ker}\,g_0=1, \\
				\dim H^2(Y;\g_\rho) &= 2 \dim H^1(W;\g_\rho) - \dim \mathrm{Im}\,g_1=n.
			\end{align*}
			Also,  we have $\dim H^1(Y;\g_\rho) = n+1$ since the Euler characteristic of $Y$ is zero.
		\end{proof}

		It is geometrically natural to choose a basis $\mathbf{h}^i_Y$ of $H^i(Y;\g_\rho)$ as
		\begin{equation} \label{eqn:hY}
			\mathbf{h}^0_Y=  p^{(2)},\  \mathbf{h}^1_Y = (\mu^{(2)},\lambda_1^{(2)},\ldots, \lambda_n^{(2)}),\
			\mathbf{h}^2_Y= (\Sigma_1^{(2)},\ldots,\Sigma_n^{(2)}).
		\end{equation}
	Alternatively, we may describe the basis $\mathbf{h}_Y$ as follows (as in Example \ref{exam:torus}).
	Let $P=\frac{1}{8}e_2 \in H^0(Y;\g_\rho)$ and consider isomorphisms
	\begin{align*}
		& \psi^0 : H^0(Y;\g_\rho) \rightarrow \Cbb,\ \alpha \mapsto \langle \alpha(\widetilde{p}), P \rangle, \\
		& \psi^1 : H^1(Y;\g_\rho) \rightarrow \Cbb^{n+1}, \ \alpha \mapsto \left(\langle \alpha(\widetilde{\mu}), P \rangle, \langle \alpha(\widetilde{\lambda}_1), P \rangle,\ldots,\langle \alpha(\widetilde{\lambda}_n), P \rangle\right),\\
		& \psi^2 : H^2(Y;\g_\rho) \rightarrow \Cbb^n,  \ \alpha \mapsto \left(\langle \alpha(\widetilde{\Sigma}_1), P \rangle,\ldots,\langle \alpha(\widetilde{\Sigma}_n), P \rangle\right).
	\end{align*}  
Then  the basis $\mathbf{h}^i_Y$ maps to the standard basis of $\Cbb$, $\Cbb^{n+1}$, or  $\Cbb^n$ under $\psi^i$ accordingly.

		\begin{proposition}  \label{lem:YTor} $\tau(Y;\rho,\mathbf{h}_Y,o_Y)=(-1)^{n-1}(m-m^{-1})^{2n-2}$
		\end{proposition}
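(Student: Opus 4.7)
The plan is to compute the torsion directly from the CW-decomposition of $Y$ coming from the homotopy equivalence $Y \simeq V \times S^1$, where $V = \lambda_1 \vee \cdots \vee \lambda_n$ is the wedge of $n$ circles; this yields a single $0$-cell $p$, $n+1$ $1$-cells $\mu,\lambda_1,\ldots,\lambda_n$, and $n$ $2$-cells $\Sigma_j$ attached via the commutators $[\mu,\lambda_j]$.

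The crucial fact is that, by \eqref{eqn:diag}, the operators $\Ad(\mu)$ and $\Ad(\lambda_j)$ are simultaneously diagonal in the basis $(e_1,e_2,e_3)$ of $\g$. Consequently $C^\ast(Y;\g_\rho) = C^\ast_{(1)} \oplus C^\ast_{(2)} \oplus C^\ast_{(3)}$ splits as cochain complexes, where $C^\ast_{(k)}$ is spanned by the cochains $c^{(k)}$ of weight $k$. Since $e_2$ is fixed by every $\Ad(\mu)$ and $\Ad(\lambda_j)$, the middle summand $C^\ast_{(2)}$ has vanishing differentials and its canonical basis coincides with $\mathbf{h}_Y$; in particular it contributes $1$ to the torsion. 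The outer pieces $C^\ast_{(1)}$ and $C^\ast_{(3)}$ are acyclic three-term complexes, whose torsions I would compute from the explicit choices $b^0_{(1)} = p^{(1)}$, $b^1_{(1)} = (\lambda_1^{(1)},\ldots,\lambda_n^{(1)})$ (and analogously for the $e_3$-piece), obtaining rational expressions in $m^{\pm 2}$ that are independent of the $l_j$'s.

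Combining the three summands by multiplicativity of torsion over direct sums, I would then convert back to the geometric basis $\mathbf{c}_Y$: at cochain degree $i$ with $k_i$ cells, the reordering from the split basis to $\mathbf{c}_Y$ has sign $(-1)^{\binom{k_i}{2}}$, and substituting $k_0 = 1$, $k_1 = n+1$, $k_2 = n$ simplifies the total. Next I would apply the sign-refinement factor $(-1)^{|C^\ast|}$ computed from the dimensions in Proposition~\ref{lem:Yh}, together with the Turaev orientation sign $\epsilon(o_Y)$. Since every $2$-cell $\Sigma_j$ is attached via a commutator, all boundary maps of the untwisted chain complex $C_\ast(Y;\Rbb)$ vanish, so $\epsilon(o_Y)$ reduces to a purely combinatorial sign. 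The identity $(m^2-1)(m^{-2}-1) = -(m-m^{-1})^2$ repackages the outer-piece torsions into powers of $(m-m^{-1})^2$, and collecting all factors gives $(-1)^{n-1}(m-m^{-1})^{2n-2}$.

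The hard part will be the simultaneous bookkeeping of three independent sources of sign: the permutations between the geometric and split bases at each cochain degree, the sign-refinement exponent $|C^\ast|$, and the orientation correction $\epsilon(o_Y)$. A cleaner alternative, which I would pursue in parallel as a consistency check, is the gluing formula \eqref{eqn:gluing} applied to the Mayer--Vietoris short exact sequence \eqref{eqn:sesY} already used in the proof of Proposition~\ref{lem:Yh}; this replaces the direct calculation by computing $\mathrm{Tor}(W)$ and the torsion $\mathrm{tor}(\mathcal{H})$ of the long exact sequence \eqref{eqn:les}, whose connecting maps are known from \eqref{eqn:gs}.
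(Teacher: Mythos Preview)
Your primary approach is correct in outline and genuinely different from the paper's.  The paper proves the proposition entirely via the Mayer--Vietoris route you mention only as a consistency check: it applies the gluing formula~\eqref{eqn:gluing} to the sequence~\eqref{eqn:sesY}, cancels the torsions of $W\simeq Y_i$, and computes $\mathrm{tor}(\mathcal{H})$ of the long exact sequence~\eqref{eqn:les} by writing out the seven transition matrices $A_0,\ldots,A_6$ explicitly from the maps $f_i,g_i,d_i$ (in particular the $(6n-4)\times(6n-4)$ matrix $A_5$), obtaining $\mathrm{tor}(\mathcal{H})=(m-m^{-1})^{2-2n}$; it also checks $\epsilon(o_Y)=1$ for its larger CW-structure $c_{V\times S^1}=(c_V,c_V,c_{\widetilde V},c_{\widetilde V})$ on $V\times S^1$.

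Your weight-space splitting is more elementary and better exploits that $\Ad$ is diagonal on $(e_1,e_2,e_3)$: the $e_2$-summand has zero differential with basis literally equal to $\mathbf{h}_Y$, while each acyclic outer summand contributes a factor built from $(m^{\pm2}-1)^{1-n}$, independent of the $l_j$.  One caution on the signs: with your \emph{minimal} CW-structure on $V\times S^1$ and the natural ordering $(p,\mu,\lambda_1,\ldots,\lambda_n,\Sigma_1,\ldots,\Sigma_n)$, the untwisted complex has $\alpha_i=\beta_i$, whence $|C_\ast|\equiv 1+(n+2)^2\equiv n+1\pmod 2$ and $\epsilon(o_Y)=(-1)^{n+1}$, not $1$.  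This is not an error---$\epsilon(o_Y)$ and $(-1)^{|C^\ast(Y;\g_\rho)|}$ are each structure-dependent and only their product with $\mathrm{tor}$ is invariant---but it means your three sign sources will not individually match the paper's and must be assembled with care.  The paper's approach, though heavier for this proposition alone, has the virtue that the identical Mayer--Vietoris template is reused in the proof of Theorem~\ref{lem:torproduct}.
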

		\begin{proof}
			 Recall  that $Y$ decomposes into two copies $Y_1$ and $Y_2$ of $W \times I$ with $Y_1 \cap Y_2 =W \sqcup W$  and  that $W$ retracts to $V$, the wedge sum of $n$ circles $\lambda_1,\ldots,\lambda_n$ with the base point $p$.

			 We construct $V\times I$ from two copies of $V$ (regarding them as $V \times \partial I$) by adding cells $p\times I$, $\lambda_1\times I,\ldots,\lambda_n\times I$.
			 Choose cell orders of $V$, $V \times I,$ and $V \times S^1$ as
			 \begin{itemize}
			 	\item $c_V=(p,\lambda_1,\ldots,\lambda_n)$,
			 	\item $c_{V \times I}=(c_V, c_V, c_{\widetilde{V}})$ where $c_{\widetilde{V}}=(p\times I$, $\lambda_1\times I,\ldots,\lambda_n\times I)$,
			 	\item $c_{V \times S^1}=(c_V,c_V,c_{\widetilde{V}},c_{\widetilde{V}})$.
			 \end{itemize} 
			 Then the basis transition between $(c_{V\times I}, c_{V\times I})$ and $(c_V,c_V,c_{V\times S^1})$ is an even permutation.
			 On the other hand, for $h_{V\times S^1}=(p,\mu,\lambda_1,\ldots,\lambda_n,\Sigma_1,\ldots,\Sigma_n)\,(=h_Y)$
			  a straightforward computation shows that 
			 \begin{align}
			 &	\mathrm{Tor}(C_\ast(V\times S^1;\Rbb),c_{V\times S^1},h_{V\times S^1}) \label{eqn:torsign} \\
			 &	=(-1)^{|C_\ast(V\times S^1;\Rbb)|}\,\det \begin{pmatrix}
			 		I_n & 0 \\
			 		I_n &  I_n
			 	\end{pmatrix}^{-1} \det \begin{pmatrix}
			 		-I_n & \rvline & 0 & \rvline & I_n & \rvline & 0 \\
			 		\hline
			 		I_n  & \rvline & 0  & \rvline & 0  & \rvline & 0 \\
			 		\hline
			 		0  & \rvline & \begin{matrix} 
			 			1 \\ 
			 			1
			 		\end{matrix}  & \rvline & 0  & \rvline &
			 		\begin{matrix} 
			 			0 \\ 
			 			1
			 		\end{matrix}
			 	\end{pmatrix}
			 	\det \begin{pmatrix}
			 		1 & 1 \\
			 		-1 & 0 
			 	\end{pmatrix}^{-1}=1. \nonumber
			 \end{align}
			Note that $|C_\ast(V\times S^1;\Rbb)|$ is obviously even.
			
			We choose any triangulation of $Y$ and cell orders $c_Y$, $c_{Y_i}$, and $c_W$ according to $c_{V \times S^1}$, $c_{V\times I}$, and $c_V$, respectively.
			Applying the formula~\eqref{eqn:gluing} to the short exact sequence \eqref{eqn:sesY},
	 	we obtain 
	 	\begin{equation}	 		
	 		 1 = (-1)^{v+u}\, \mathrm{Tor}(C^\ast(Y;\g_\rho),\mathbf{c}_Y,\mathbf{h}_Y)\, 
	 		\mathrm{tor}(\mathcal{H}) \nonumber
	 	\end{equation}
			after canceling out the torsion terms for $W \simeq Y_i$.
			Here $\mathrm{tor}(\mathcal{H})$ is the  Reidemeister torsion of the long exact sequence \eqref{eqn:les} with respect to $\mathbf{h}_Y$ and $\mathbf{h}_W$.
			Note that the basis transition between $(\mathbf{c}_{Y_1},\mathbf{c}_{Y_2})$ and $(\mathbf{c}_Y,\mathbf{c}_W,\mathbf{c}_W)$ is an even permutation.
			One easily checks from the definitions \eqref{eqn:v} and \eqref{eqn:u} that $v\equiv0$ and $u\equiv \sum_i \beta_i(C^\ast(Y;\g_\rho)) \equiv n-1$  in modulo 2.
			To simplify notations, we rewrite the sequence \eqref{eqn:les} as 
			\begin{center}
				\begin{tikzcd}[column sep=0.6cm]
					0 \arrow[r]  & \mathcal{H}^0 \arrow[r,"f_0"] \arrow[d, "\simeq"] & \mathcal{H}^1 \arrow[r,"g_0"]  \arrow[d, "\simeq"]& \mathcal{H}^2 \arrow[r,"d_0"] \arrow[d, "\simeq"] &  \mathcal{H}^3 \arrow[r,"f_1"]  \arrow[d, "\simeq"] & \mathcal{H}^4 \arrow[r,"g_1"]  \arrow[d, "\simeq"] & \mathcal{H}^5  \arrow[r,"d_1"] \arrow[d, "\simeq"] & \mathcal{H}^6 \arrow [r] \arrow[d, "\simeq"] & 0\\ 
					 & \Cbb \arrow[r,"f_0"]& \Cbb^2 \arrow[r,"g_0"] & \Cbb^2 \arrow[r,"d_0"] & \Cbb^{n+1} \arrow[r,"f_1"] & \Cbb^{6n-4} \arrow[r,"g_1"]&  \Cbb^{6n-4} \arrow[r,"d_1"]& \Cbb^n & 
				 \end{tikzcd}
			\end{center}
			where the first and second rows are identified with respect to  $\mathbf{h}_Y$ and $\mathbf{h}_W$.
			We choose a tuple $b^i$ of vectors in $\mathcal{H}^i$ as 			
			\begin{align*} 
				b^0&=\mathfrak{e}_1, \  b^1=\mathfrak{e}_1,\ b^2=\mathfrak{e}_1,\ b^3=(\mathfrak{e}_2, \mathfrak{e}_3,\ldots, \mathfrak{e}_{n+1}),\\
				b^4&=(\mathfrak{e}_{n+1},\mathfrak{e}_{n+2}, \ldots,\mathfrak{e}_{6n-4}),\ b^5 = (\mathfrak{e}_1, \mathfrak{e}_2,\ldots,\mathfrak{e}_n), \ b^6 = \emptyset
			\end{align*}
			where $\mathfrak{e}_k$ is a unit vector whose coordinates are all zero, except one at the $k$-th coordinate. Then the basis transition matrix $A_i$ at $\mathcal{H}^i$ (see Section \ref{sec:cw}) is given by 
			\begin{align*}
			&	A_0=I_1,\ 
				A_1=\begin{pmatrix}
					1 & 1 \\
					1 & 0	
				\end{pmatrix}, \
				A_2=\begin{pmatrix}
					1 & 1 \\
					-1 & 0	
				\end{pmatrix},\ A_3= I_{n+1} \\
			& A_4=
			\begin{pmatrix}
				\begin{matrix}
				I_{n} & 0 \\
				0 & I_{2n-2}
				\end{matrix} & \rvline & 0 \\
				\hline
				\begin{matrix}
					I_n & 0 \\
					0 & 0
				\end{matrix} & \rvline &  I_{3n-2} 
			\end{pmatrix}, \ 
			A_5=
			\begin{pmatrix}
			\begin{matrix}
				0 \\
				I_{2n-2}
			\end{matrix}   & \rvline & 
			\begin{matrix}
				-I_n & 0& 0\\
				0& -m^2 I_{n-1} &0 \\
				0&0 &  -m^{-2} I_{n-1} 
			\end{matrix} & \rvline & \begin{matrix} I_n \\ 0 \end{matrix} \\
			\hline
			\begin{matrix}
				0 \\
				-I_{2n-2}
			\end{matrix}   & \rvline & I_{3n-2} & \rvline &  0
		\end{pmatrix}, \ A_6= I_n.
			\end{align*}
	Here we used the equation \eqref{eqn:gs} with the fact that $f_0(\mathfrak{e}_1) = \mathfrak{e}_1+\mathfrak{e}_2$,  $f_1(\mathfrak{e}_1) =0$,  $f_1(\mathfrak{e}_{j+1}) = \mathfrak{e}_j+\mathfrak{e}_{3n-2+j}$, $d_0(\mathfrak{e}_1) = \mathfrak{e}_1$, $d_1(\mathfrak{e}_{j}) = \mathfrak{e}_j $ for  $1 \leq j \leq n$.			
			It follows that 
			\begin{align*}
				\mathrm{tor}(\mathcal{H}) &= -\det
A_5^{-1} \\[-10pt]
				&= (-1)^{n-1} \det     \begin{pmatrix}
					I_{2n-2}
					& \rvline & 
					\begin{matrix}
						0 & -m^2 I_{n-1} &0 \\
						0  & 0& - m^{-2}I_{n-1} 
					\end{matrix}  \\
					\hline
					\begin{matrix}
						0 \\
						-I_{2n-2}
					\end{matrix}   & \rvline & I_{3n-2} 
				\end{pmatrix}^{-1} \\
				&= (-1)^{n-1}\det \begin{pmatrix} 
					(1-m^2)I_{n-1} & 0 \\
					0 & (1-m^{-2})I_{n-1}
				\end{pmatrix}^{-1}\\
				&= (m-m^{-1})^{2-2n}.
			\end{align*}
			Note that the third equation follows from the determinant formula for a block matrix. 
			We conclude that
	 	\begin{equation}	 		
			 \mathrm{Tor}(C^\ast(Y;\g_\rho),\mathbf{c}_Y,\mathbf{h}_Y)=
			(-1)^{n+1}(m-m^{-1})^{2n-2}. \nonumber
		\end{equation}
		This completes the proof, since we have $\epsilon(o_Y) =1$ from the equation \eqref{eqn:torsign}.
		\end{proof}
		\begin{remark}  We have $\tau(Y;\rho,\mathbf{h}_Y,o_Y)=1$ for $n=1$. It agrees with the computation given in Example \ref{exam:torus} since $Y$ retracts to a 2-torus when $n=1$.
		\end{remark}

		\subsection{The knot exterior of the connected sum} \label{sec:gluing}
		The composing space $Y$ has $n+1$ boundary tori  $\Sigma_1,\ldots,\Sigma_n$, and $\Sigma$.
		For $1 \leq j \leq n$ we glue the knot exterior $M_j$ of  $K_j \subset S^3$ to $Y$ by using a homeomorphism $\partial M_j\rightarrow  \Sigma_j$ that maps the meridian and canonical longitude of $K_j$ to $\mu$ and $\lambda_j$, respectively.
		The resulting manifold $M$ is a compact 3-manifold with $\partial M = \Sigma$ and is the knot exterior of the connected sum of $K_1,\ldots,K_n$.  
 		We refer to \cite[IX.21--22]{jaco1980lectures} for details.
		We choose the orientation $o_M$ of $H_\ast(M;\Rbb)$ as in Section \ref{sec:knot}, i.e., the one induced from the basis  $h_M=(pt, \mu)$ of $H_\ast(M;\Rbb)$. 

		Let $\rho: \pi_1(M)\rightarrow \G$ be an irreducible representation. 
		We  denote by $m$ and $l_j$ eigenvalues of $\rho(\mu)$ and $\rho(\lambda_j)$ respectively as in the equation \eqref{eqn:diag}. For simplicity we assume that 
		\begin{equation} \label{eqn:assum}
			m \neq \pm1 \textrm{ and } \Delta_{K_j} (m^2) \neq 0 \textrm{ for all } 1 \leq  j \leq n
		\end{equation}
		where $\Delta_{K_j}$ is the Alexander polynomial of $K_j$.
		It follows that each restriction $\rho_j:\pi_1(M_j)\rightarrow \G$ of $\rho$ is either irreducible or abelian. 
		We further assume that if $\rho_j$ is irreducible, then
		\begin{equation}\label{eqn:assum2}
			 l_j \neq \pm 1 \textrm{ and } \rho_j \textrm{ is } \mu \textrm{-regular.}	
		\end{equation}
		Without loss of generality, we assume that $\rho_1,\ldots, \rho_k$ are abelian and $\rho_{k+1},\ldots,\rho_n$ are irreducible where $k$ should be less than $n$, otherwise $\rho$ becomes abelian. In particular, $l_j \neq \pm 1$ for some $1 \leq j \leq n$.
	

		\begin{proposition} \label{prop:mh} We have
			\begin{equation}
				\dim H^i(M;\g_\rho) = \left \{ 
				\begin{array}{ll}
					n-k & i=1,2,\\
					0 & otherwise.
				\end{array}
				\right.
			\end{equation}
		\end{proposition}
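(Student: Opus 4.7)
The natural approach is Mayer--Vietoris applied to the decomposition $M = Y \cup (M_1 \sqcup \cdots \sqcup M_n)$, glued along the boundary tori $\Sigma_j = \partial M_j$.  Two pieces of data are immediate: $H^0(M;\g_\rho)=0$ because $\rho$ is irreducible (so no nonzero $\Ad$-invariant vector in $\g$), and $H^3(M;\g_\rho)=0$ because the knot exterior $M$ has the homotopy type of a $2$-complex.

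I would first collect the ingredient cohomology dimensions.  For the composing space $Y$ they are supplied by Proposition~\ref{lem:Yh}; for each torus $\Sigma_j$, Example~\ref{exam:torus} gives $(1,2,1)$; for each $M_j$ with abelian restriction ($j\le k$), Lemma~\ref{lem:abel} gives $(1,1,0)$; and for each $M_j$ with $\mu$-regular irreducible restriction ($j>k$), Section~\ref{sec:irred} gives $\dim H^1(M_j;\g_\rho)=1$ by definition of $\mu$-regularity, while $\chi(M_j;\g_\rho)=\chi(M_j)\cdot\dim\g=0$ combined with $H^0=H^3=0$ forces $\dim H^2(M_j;\g_\rho)=1$.

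The heart of the argument is the analysis of the Mayer--Vietoris restriction maps
\[ \Psi^i\colon H^i(Y;\g_\rho)\oplus\bigoplus_{j=1}^n H^i(M_j;\g_\rho) \longrightarrow \bigoplus_{j=1}^n H^i(\Sigma_j;\g_\rho). \]
For $\Psi^0$: every invariant subspace is spanned by $e_2$, so $\Psi^0$ takes the shape $(c,c_1,\ldots,c_k)\mapsto(c-c_j)_{j=1}^n$ (with $c_j:=0$ for $j>k$), which is injective with cokernel of dimension $n-k-1$ since $k<n$.  For $\Psi^1$: by $\mu$-regularity (for $j>k$) and by the explicit generators \eqref{eqn:xbasis} (for $j\le k$), the image of $H^1(M_j;\g_\rho)\to H^1(\Sigma_j;\g_\rho)$ always has nonzero $\widetilde{\mu}$-coordinate, while the image of $H^1(Y;\g_\rho)\to\bigoplus_j H^1(\Sigma_j;\g_\rho)$ forces these $\widetilde{\mu}$-coordinates to agree across all $\Sigma_j$.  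Writing $\Psi^1$ as a linear map $\Cbb^{2n+1}\to\Cbb^{2n}$ and bookkeeping shows that its kernel is parameterized by a single ``diagonal'' free parameter, so $\dim\ker\Psi^1=1$.

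Putting these together, $\dim H^1(M;\g_\rho)=\dim\mathrm{coker}\,\Psi^0+\dim\ker\Psi^1=(n-k-1)+1=n-k$.  Since $\chi(M;\g_\rho)=\chi(Y;\g_\rho)+\sum_j\chi(M_j;\g_\rho)-\sum_j\chi(\Sigma_j;\g_\rho)=0$ and $H^0=H^3=0$, this immediately forces $\dim H^2(M;\g_\rho)=n-k$ as well.  The main obstacle I anticipate is the $\Psi^1$ kernel computation: one must use $\mu$-regularity to pin down the direction of the restriction $H^1(M_j;\g_\rho)\to H^1(\Sigma_j;\g_\rho)$ inside the $(\mu,\lambda_j)$-plane and then see how the composing space globally couples the $n$ tori so that exactly one relation survives.
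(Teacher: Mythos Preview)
Your proposal is correct and follows essentially the same Mayer--Vietoris argument as the paper: the paper writes the long exact sequence \eqref{eqn:les2} from \eqref{eqn:ese3}, computes the maps $G_0$ and $G_1$ (your $\Psi^0$ and $\Psi^1$) explicitly in matrix form (equations \eqref{eqn:g0} and \eqref{eqn:g1}), reads off $\dim\mathrm{Im}\,D_0=n-k-1$ and $\dim\ker G_1=1$, and finishes with the Euler characteristic for $H^2$. The only cosmetic differences are that you obtain $H^0=0$ from irreducibility and $H^3=0$ from the $2$-complex homotopy type directly, whereas the paper extracts these from injectivity of $G_0$ and surjectivity of $G_2$ within the same sequence; also, the paper's verification that $G_1$ has rank $2n$ (delete the $(n{+}1)$-st column to get an invertible $2n\times 2n$ matrix, independent of the values $\kappa_j$) is exactly the ``bookkeeping'' you anticipate.
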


		\begin{proof} 
		From the short exact sequence
		\begin{equation} \label{eqn:ese3}
				0 \rightarrow C^\ast(M;\g_\rho) \rightarrow \bigoplus_{j=1}^n C^\ast(M_j;\g_\rho) \oplus C^\ast(Y;\g_\rho) \rightarrow \bigoplus_{j=1}^n C^\ast(\Sigma_j;\g_\rho) \rightarrow 0,
			\end{equation}
			we have
			\begin{align}
				\mathcal{G}: 0 
				& \longrightarrow  H^0(M;\g_\rho) \overset{F_0}{\longrightarrow} \bigoplus_{j=1}^n H^0(M_j;\g_\rho) \oplus  H^0(Y;\g_\rho) \overset{G_0}{\longrightarrow} \bigoplus_{j=1}^n H^0(\Sigma_j;\g_\rho)  \label{eqn:les2}\\ 
				& \overset{D_0}{\longrightarrow} H^1(M;\g_\rho) \overset{F_1}{\longrightarrow}  \bigoplus_{j=1}^n H^1(M
				_j;\g_\rho) \oplus H^1(Y;\g_\rho)  \overset{G_1}{\longrightarrow} \bigoplus_{j=1}^n H^1(\Sigma
				_j;\g_\rho) \nonumber \\
				&  \overset{D_1}{\longrightarrow} H^2(M;\g_\rho) \overset{F_2}{\longrightarrow}  \bigoplus_{j=1}^n H^2(M
				_j;\g_\rho) \oplus H^2(Y;\g_\rho)  \overset{G_2}{\longrightarrow} \bigoplus_{j=1}^n H^2(\Sigma
				_j;\g_\rho) \overset{D_2}{\longrightarrow} H^3(M;\g_\rho)\longrightarrow0. \nonumber
			\end{align}
%
		With respect to the bases $\mathbf{h}_{\Sigma_j}$, $\mathbf{h}_{M_j}$, and $\mathbf{h}_Y$ given in Example \ref{exam:torus} and Sections \ref{sec:knot} and \ref{sec:connector}, the map $G_0$ in the sequence \eqref{eqn:les2} agrees with		
			\begin{equation}\label{eqn:g0}
				G_0: \Cbb^{k+1} \rightarrow \Cbb^n,\ (x_1,\ldots,x_k,y) \mapsto (x_1-y,\cdots,x_k-y, \, \underbrace{-y, \ldots,-y}_{n-k}).
			\end{equation}
		It follows that $\dim \mathrm{Im}\,G_0 =\dim \mathrm{Ker}\,D_0=k+1$ and $\dim \mathrm{Im}\,D_0= n-k-1$.
		Also, the matrix expression of $G_1 : \Cbb^{2n+1} \rightarrow \Cbb^{2n}$ is given by
			\begin{equation} \label{eqn:g1}
				G_1 = \begin{pmatrix}
				1 & \cdots & 0 & \rvline & -1 & \rvline & 0 & \cdots & 0 \\
				\kappa_1&   & 0 & \rvline & 0 & \rvline  & -1 &  & 0\\
				\vdots & \ddots& \vdots & \rvline &  \vdots & \rvline & \vdots & \ddots & \vdots \\
				0 &  & 1 &  \rvline & -1 &\rvline & 0 & & 0  \\
				0 & \cdots & \kappa_n & \rvline & 0 & \rvline& 0 & \cdots & -1 
			\end{pmatrix}
		\end{equation}
		  where $\kappa_j = \langle \mathbf{h}^1_{M_j}(\widetilde{\lambda_j}),P\rangle$.
		  Since we obtain an invertible matrix (of size $2n$) from $G_1$  by deleting the $(n+1)$-st column, we have $\dim \mathrm{Im}\,G_1 = 2n$  and $\dim \mathrm{Ker}\, G_1 =
		  \dim \mathrm{Im}\,F_1=1$. 
		  It follows that $\dim H^1(M;\g_\rho) = \dim \mathrm{Im}\, F_1 + \dim \mathrm{Im}\, D_0=n-k$. 
		  On the other hand, $G_2$ is surjective, since the restriction map $H^2(Y;\g_\rho) \rightarrow \oplus_{j=1}^n H^2(\Sigma_j;\g_\rho)$  is an isomorphism (see the equation~\eqref{eqn:hY}). 
		  It follows that $\dim H^3(M;\g_\rho)=0$ and   $\dim H^2(M;\g_\rho)=n-k$ since the Euler characteristic of $M$ is zero.
		\end{proof}
	
		We let $P=\frac{1}{8}e_2 \in H^0(Y;\g_\rho)$ and define maps 
		\begin{align*}
		& \psi^1 : H^1(M;\g_\rho) \rightarrow \Cbb, \ \alpha \mapsto\langle \alpha(\widetilde{\mu}), P \rangle,\\
		& \psi^2 : H^2(M;\g_\rho) \rightarrow \Cbb^{n-k}, \ \alpha \mapsto \left(\langle \alpha(\widetilde{\Sigma}_{k+1}), P \rangle,\ldots,\langle \alpha(\widetilde{\Sigma}_{{n}}), P \rangle \right).
	\end{align*}  	
		\begin{lemma} \label{lem:isoms} $\psi^1$ induces an isomorphism  $H^1(M;\g_\rho)/\mathrm{Im}\,D_0 \rightarrow \Cbb$ and  $\psi^2$ is an isomorphism.
		\end{lemma}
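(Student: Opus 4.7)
The plan is to run everything through the Mayer--Vietoris long exact sequence $\mathcal{G}$ of \eqref{eqn:les2}, using the explicit descriptions of the maps obtained in the proof of Proposition \ref{prop:mh}. A recurring point is that $P=\frac{1}{8}e_2$ is $\mathrm{Ad}\rho(\mu)$-invariant, because $\rho(\mu)$ is diagonal; by the $\mathrm{Ad}$-invariance of the Killing form this gives $\langle (\mathrm{Ad}\rho(\mu)-I)x,\,P\rangle = 0$ for every $x \in \g$. This makes $\psi^1$ and $\psi^2$ well-defined on cohomology, and it also shows $\psi^1 \circ D_0 = 0$: the standard zig-zag description of $D_0$ produces a representative $\beta \in C^1(M;\g_\rho)$ of $D_0[\alpha]$ whose restriction $\beta|_Y$ is a coboundary $\delta\gamma$ for some $\gamma \in C^0(Y;\g_\rho)$. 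Since $\mu \subset Y$, we get $\beta(\widetilde{\mu}) = (\mathrm{Ad}\rho(\mu) - I)\gamma(\widetilde{p})$, which pairs trivially with $P$. Hence $\psi^1$ descends to the $1$-dimensional quotient $H^1(M;\g_\rho)/\mathrm{Im}\,D_0$ (the dimension computation being immediate from Proposition \ref{prop:mh} and the image-dimension of $D_0$ recorded in its proof).

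To see this descent is an isomorphism, I exhibit a class $[\beta]$ with $\psi^1[\beta]\neq 0$. By exactness, a nontrivial class in $H^1(M;\g_\rho)/\mathrm{Im}\,D_0$ maps injectively into $\ker G_1$ under $F_1$, and by \eqref{eqn:g1} this kernel is the line spanned by $(1,\ldots,1;\,1,\kappa_1,\ldots,\kappa_n)$ in the bases $(\mathbf{h}^1_{M_j})_j$ and $\mathbf{h}^1_Y$. Since $\widetilde{\mu}\subset\widetilde{Y}$, the value $\psi^1[\beta] = \langle\beta(\widetilde{\mu}),P\rangle$ equals the $\mu$-component of $\psi^1_Y([\beta|_Y])$ from Section \ref{sec:connector}, which simply reads off the parameter $t$ and is therefore nonzero on any nonzero element of $\ker G_1$.

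For $\psi^2$, I first observe that $F_2$ is injective: the matrix \eqref{eqn:g1} has rank $2n$ (noted in the proof of Proposition \ref{prop:mh}), so $G_1$ is surjective, and exactness of $\mathcal{G}$ at $\bigoplus_j H^1(\Sigma_j;\g_\rho)$ forces $D_1 = 0$ and thus $F_2$ injective. Next, in the $\psi^2$-bases on $H^2(M_j;\g_\rho)$, $H^2(Y;\g_\rho)$ and $H^2(\Sigma_j;\g_\rho)$, the map $G_2$ takes the form
\[
G_2\bigl((x_j)_{j>k},\,(y_j)_{j=1}^n\bigr) \;=\; \bigl((x_j - y_j)_{j>k},\,(-y_j)_{j\leq k}\bigr),
\]
because $H^2(M_j;\g_\rho)=0$ for $j \leq k$ and, for $j > k$, the restrictions $H^2(M_j;\g_\rho)\to H^2(\Sigma_j;\g_\rho)$ and $H^2(Y;\g_\rho)\to \bigoplus_j H^2(\Sigma_j;\g_\rho)$ become identities once the bases are normalized via the respective $\psi^2$'s (Section \ref{sec:knot}, Example \ref{exam:torus}, and \eqref{eqn:hY}). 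Therefore every element of $\ker G_2$ satisfies $y_j=0$ for $j\leq k$ and $y_j=x_j$ for $j>k$, so it is determined by its last $n-k$ coordinates, which are exactly $\psi^2[\alpha]$ when the element equals $F_2[\alpha]$. Combined with the injectivity of $F_2$ this yields injectivity of $\psi^2$, and since both source and target have dimension $n-k$, $\psi^2$ is an isomorphism.

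The main bookkeeping obstacle is verifying the matrix formula for $G_2$ above, as this map is not written out in the excerpt; once one tracks how the normalized bases $\mathbf{h}^2_{M_j}$, $\mathbf{h}^2_Y$ and $\mathbf{h}^2_{\Sigma_j}$ behave under restriction to each boundary torus, the rest of the argument reduces to the dimension counts already packaged in Proposition \ref{prop:mh}.
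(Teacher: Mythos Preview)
Your argument is correct and uses the same Mayer--Vietoris machinery as the paper, with two minor differences worth noting. For $\psi^1$, the paper factors through the restriction $H^1(M;\g_\rho)\to H^1(M_j;\g_\rho)$ (compatible with the isomorphism $\alpha\mapsto\langle\alpha(\widetilde\mu),P\rangle$ of Section~\ref{sec:knot}) rather than through $H^1(Y;\g_\rho)$ as you do; both are legitimate since $\mu$ lies in every $\Sigma_j$, and both reduce the vanishing on $\mathrm{Im}\,D_0$ to $F_1\circ D_0=0$. For $\psi^2$, the paper simply declares the claim ``obvious from the sequence \eqref{eqn:les2}'', whereas you spell out the injectivity of $F_2$ via surjectivity of $G_1$ and the explicit form of $\ker G_2$; your extra bookkeeping is exactly what one would write if asked to unpack that word.
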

		\begin{proof} It is clear that $\psi^1$ is compatible with the isomorphism $H^1(M_j;\g_\rho)\rightarrow \Cbb$, $\alpha \mapsto \langle \alpha(\widetilde{\mu}),P \rangle$ for  $1 \leq j \leq n$.
			In particular, $\psi^1$ is surjective.	
			On the other hand, it follows from the sequence \eqref{eqn:les2} that an element of $\mathrm{Im}\, D_0$ maps to the trivial element in $H^1(M_j;\g_\rho)$ under the restriction map $H^1(M;\g_\rho) \rightarrow H^1(M_j;\g_\rho)$. Therefore, $\psi^1$ induces a map $H^1(M;\g_\rho)/ \mathrm{Im}\,D_0 \rightarrow \Cbb$ which is an isomorphism since $\dim H^1(M;\g_\rho)=n-k$ and $\dim \mathrm{Im}\,D_0=n-k-1$.
			The second claim that $\psi^2$ is an isomorphism  is obvious from the sequence \eqref{eqn:les2}.
		\end{proof}
	
		Recall that the basis $\mathbf{h}_{\Sigma_j}^0$ of $H^0(\Sigma_j;\g_\rho)$ gives us an isomorphism $\oplus_{j=1}^n H^0(\Sigma_j ;\g_\rho) \simeq \Cbb^n$.
		Denoting by $(\mathfrak{e}_1,\ldots,\mathfrak{e}_n)$ the standard basis  of $\Cbb^n$,
	we choose a basis of $\mathrm{Im}\, D_0$ as
	\begin{equation*}
		 \left( D_0(\mathfrak{e}_{k+1}),\ldots, D_0(\mathfrak{e}_{n-1}) \right).
	\end{equation*}
	Note that the equation \eqref{eqn:g0} implies that the above tuple is indeed a basis of $\mathrm{Im}\,D_0$.
	We then extend it to a basis $\mathbf{h}^1_M$ of $H^1(M;\g_\rho)$ by adding an element $\xi$  at the first position which maps to the standard basis of $\Cbb$ under $\psi^1$: 
	\[\mathbf{h}^1_M =(\xi, D_0(\mathfrak{e}_{k+1}),\ldots, D_0(\mathfrak{e}_{n-1})).\] 	We also choose  a basis $\mathbf{h}^2_M$ of $H^2(M;\g_\rho)$ by the pre-image of the standard basis of $\Cbb^{n-k}$ under $\psi^2$ and define the \emph{adjoint Reidemeister torsion} by
	\begin{equation} \label{eqn:defn}
		\tau_\mu(M;\rho)=\tau(M;\rho, \mathbf{h}_M ,o_M).
	\end{equation}
		Note that the above definition reduces to the definition  \eqref{eqn:torog} of a knot exterior when $n=1$.
		\begin{lemma}
			The definition \eqref{eqn:defn} does not depend on the choice of $P \in H^0(Y;\g_\rho)$ and the order of indices of $\Sigma_{k+1},\cdots, \Sigma_n$.
		\end{lemma}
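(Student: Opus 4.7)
The plan is to verify each invariance by computing how the chosen bases $\mathbf{h}_M^1$ and $\mathbf{h}_M^2$ transform, and observing that the resulting determinantal factors cancel inside the Reidemeister torsion. The underlying reason this cancellation must hold is Proposition~\ref{prop:mh}: we have $\dim H^1(M;\g_\rho) = \dim H^2(M;\g_\rho) = n-k$ while $H^0$ and $H^3$ vanish, so since $H^1$ and $H^2$ sit at adjacent degrees, any common scalar or sign applied to both bases enters the torsion formula with opposite exponents and hence cancels.

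For $P$-independence, replacing $P$ by $cP$ with $c \in \Cbb^\ast$ scales each of $\psi^0_{\Sigma_j}$, $\psi^1$, $\psi^2$ by $c$; hence $\xi$, each $\mathbf{h}^0_{\Sigma_j}$ (and thereby each $D_0(\mathfrak{e}_j) \in \mathrm{Im}\,D_0$), and each preimage basis vector of $\mathbf{h}_M^2$ is rescaled by $c^{-1}$. The basis transitions on $H^1(M;\g_\rho)$ and $H^2(M;\g_\rho)$ are therefore both the scalar matrix $c^{-1} I_{n-k}$, and the two factors $c^{-(n-k)}$ cancel. I would also note that the specific coset representative $\xi$ is immaterial: different lifts modulo $\mathrm{Im}\,D_0$ differ by a vector in $\mathrm{Im}\,D_0$, producing a unit upper-triangular transition that contributes $1$.

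For the ordering, let $\sigma$ permute $\{k+1,\dots,n\}$: the basis $\mathbf{h}_M^2$ is simply permuted by $\sigma$, contributing $\mathrm{sgn}(\sigma)$. The same sign must appear for the $\mathrm{Im}\,D_0$ part of $\mathbf{h}_M^1$ (the entry $\xi$ is unaffected because $\psi^1$ does not involve the ordering), and this is the main obstacle: when $\sigma$ moves the index $n$, the transition is no longer a permutation matrix, because the relation $\sum_{j=k+1}^n D_0(\mathfrak{e}_j)=0$ (which follows from the exact sequence~\eqref{eqn:les2} together with the explicit form of $G_0$ in~\eqref{eqn:g0}) must be invoked to re-express $D_0(\mathfrak{e}_n)$ in the old basis, introducing a column of $-1$'s. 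The cleanest resolution I would present is to identify $\mathrm{Im}\,D_0$ with the quotient of the permutation representation $\bigoplus_{j=k+1}^n H^0(\Sigma_j;\g_\rho) \cong \Cbb^{n-k}$ by the $\sigma$-invariant line spanned by $\mathfrak{e}_{k+1}+\cdots+\mathfrak{e}_n$; multiplicativity of determinants applied to the resulting $S_{n-k}$-equivariant short exact sequence yields $\det(\sigma|_{\mathrm{Im}\,D_0}) = \mathrm{sgn}(\sigma)$. The two $\mathrm{sgn}(\sigma)$ factors on $H^1$ and $H^2$ then cancel exactly as in the rescaling case, completing the proof.
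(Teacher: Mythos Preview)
Your proof is correct and follows essentially the same approach as the paper. For the $P$-independence you give exactly the paper's scaling argument (with the extra, harmless, remark that the choice of lift $\xi$ modulo $\mathrm{Im}\,D_0$ does not matter). For the ordering, the paper checks only transpositions, splitting into the case where the index $n$ is not moved (plain permutation of both bases) and the case where it is (using $D_0(\mathfrak{e}_n)=-\sum_{j=k+1}^{n-1}D_0(\mathfrak{e}_j)$ to see the transition on $H^1$ has determinant $-1$); your short exact sequence $0\to\langle\sum\mathfrak{e}_j\rangle\to\Cbb^{n-k}\to\mathrm{Im}\,D_0\to 0$ of $S_{n-k}$-modules packages both cases at once and yields the same $\mathrm{sgn}(\sigma)$, which is a tidy but not substantively different argument.
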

		\begin{proof} If we replace $P$ by $cP$ for some $c \in \Cbb^\ast$, then the basis transition matrices for $H^i(M;\g_\rho)$ is $\frac{1}{c} I_{n-k}$  for both $i=1,2$ and thus $\tau_\mu(M;\rho)$ does not change. 
		If we exchange two indices other than $n$, then the basis transition is clearly an odd permutation for both  $H^1(M;\g_\rho)$ and  $H^2(M;\g_\rho)$. Therefore $\tau_\mu(M;\rho)$ does not change.
		If we  exchange the index $n$ with another one, then the basis transition  for $H^2(M;\g_\rho)$ is an odd permutation.
		On the other hand, since $\mathfrak{e}_{k+1} +\cdots +\mathfrak{e}_n \in \mathrm{Im}\, G_0 = \mathrm{Ker}\,D_0$ (see the equation \eqref{eqn:g0}), we have $D_0(\mathfrak{e}_n)=-D_0(\mathfrak{e}_{k+1}) - \cdots - D_0(\mathfrak{e}_{n-1})$.
		It follows that the basis transition matrix for $H^1(M;\g_\rho)$ has determinant $-1$ and thus  $\tau_\mu(M;\rho)$ does not change.
		\end{proof}
		\begin{theorem} 		\label{lem:torproduct}
		$\tau_\mu(M;\rho) =  (m-m^{-1})^{2n-2}\,\tau_\mu(M_1;\rho_1)\cdots \tau_{\mu} (M_n;\rho_n).$
		\end{theorem}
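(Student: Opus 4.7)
The plan is to apply the gluing formula \eqref{eqn:gluing} to the short exact sequence \eqref{eqn:ese3}, exactly in the spirit of the proof of Proposition \ref{lem:YTor}. I would first choose a triangulation of $M$ that restricts to compatible triangulations of each $M_j$, of $Y$, and of each boundary torus $\Sigma_j$, and order the cells so that the geometric bases satisfy the compatibility $\mathbf{c}_{\bigoplus_j C^*(M_j) \oplus C^*(Y)} = (\mathbf{c}_{C^*(M)},\mathbf{c}_{\bigoplus_j C^*(\Sigma_j)})$ up to an explicit even permutation, computable as in the $V \times S^1$ part of the earlier proof. With such a choice, formula \eqref{eqn:gluing} reduces the computation of $\Tor(C^*(M;\g_\rho),\mathbf{c}_M,\mathbf{h}_M)$ to an algebraic identity involving the torsions of $M_j$, $Y$, $\Sigma_j$, together with the Reidemeister torsion $\mathrm{tor}(\mathcal{G})$ of the long exact sequence \eqref{eqn:les2} and the signs $(-1)^{v+u}$.

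Substituting the known inputs, namely $\Tor(C^*(\Sigma_j;\g_\rho)) = 1$ from Example \ref{exam:torus}, $\Tor(C^*(Y;\g_\rho)) = (-1)^{n+1}(m-m^{-1})^{2n-2}$ from Proposition \ref{lem:YTor}, and $\Tor(C^*(M_j;\g_\rho)) = \tau_\mu(M_j;\rho_j)$ from Section \ref{sec:knot}, the remaining task is to compute $\mathrm{tor}(\mathcal{G})$. For this I would follow the template from the end of the proof of Proposition \ref{lem:YTor}: identify \eqref{eqn:les2} with a sequence of complex vector spaces using the fixed bases $\mathbf{h}_M$, $\mathbf{h}_{M_j}$, $\mathbf{h}_Y$, $\mathbf{h}_{\Sigma_j}$, select tuples $b^i$ in each slot whose images under the connecting maps form bases of the corresponding image, and take the alternating product of the determinants of the resulting basis transition matrices $A_i$. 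The explicit formulas for $G_0$ in \eqref{eqn:g0} and $G_1$ in \eqref{eqn:g1} determine most of these matrices, while the fact that the restriction $H^2(Y;\g_\rho) \to \bigoplus_j H^2(\Sigma_j;\g_\rho)$ is an isomorphism which equals the identity in our chosen bases handles the upper half of $\mathcal{G}$ trivially. The particular choice $\mathbf{h}^1_M = (\xi, D_0(\mathfrak{e}_{k+1}),\ldots, D_0(\mathfrak{e}_{n-1}))$ is designed precisely so that the transition matrix at the slot $H^1(M;\g_\rho)$ is essentially the identity modulo the single entry reflecting $\xi$, and this is what allows the product to telescope.

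The main technical obstacle will be checking that the factors $\kappa_j = \langle \mathbf{h}^1_{M_j}(\widetilde{\lambda_j}),P\rangle$ entering $G_1$ cancel out in the alternating product of determinants, leaving $\mathrm{tor}(\mathcal{G})$ equal to a pure sign. A sanity check is that $\kappa_j = 0$ for $1\le j \le k$, since in that range $\rho_j$ is abelian and $\rho_j(\lambda_j) = I$, so the only potentially obstructing contributions come from $j > k$; these must cancel against analogous factors emerging from the portion of $A_i$ tied to the $\mu$-regular summands $H^1(M_j;\g_\rho)$. Once this calculation is carried out, combining all sign contributions, namely the $(-1)^{v+u}$ from \eqref{eqn:gluing}, the permutation sign from the triangulation, the $(-1)^{n+1}$ in $\Tor(C^*(Y;\g_\rho))$, and $\epsilon(o_M) = 1$ with $o_M$ fixed as in Section \ref{sec:knot}, should deliver the identity $\tau_\mu(M;\rho) = (m-m^{-1})^{2n-2}\, \tau_\mu(M_1;\rho_1)\cdots \tau_\mu(M_n;\rho_n)$.
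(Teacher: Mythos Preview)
Your plan is essentially the paper's own: apply the multiplicativity formula \eqref{eqn:gluing} to the Mayer--Vietoris sequence \eqref{eqn:ese3}, feed in Example~\ref{exam:torus} and Proposition~\ref{lem:YTor}, and compute $\mathrm{tor}(\mathcal{G})$ by choosing explicit tuples $b^i$. Your expectation that the $\kappa_j$ cancel is also correct; in the actual computation the transition matrices at $\mathcal{G}^3$ and $\mathcal{G}^4$ both contain the $\kappa_j$ in triangular positions and their determinants are independent of them.

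The gap is in the sign bookkeeping. You write $\Tor(C^\ast(M_j;\g_\rho))=\tau_\mu(M_j;\rho_j)$ and $\epsilon(o_M)=1$, but by definition $\tau_\mu(M_j;\rho_j)=\epsilon(o_{M_j})\cdot\Tor(\cdots)$ and each $\epsilon(o_X)$ depends on the chosen cell order $c_X$, not just on $o_X$. For an arbitrary triangulation of $M$ you cannot assert that $\epsilon(o_M)$, the $\epsilon(o_{M_j})$, or the permutation sign between $(c_{\Sigma_1},\ldots,c_{\Sigma_n},c_M)$ and $(c_{M_1},\ldots,c_{M_n},c_Y)$ are individually trivial. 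In the proof of Proposition~\ref{lem:YTor} this worked because the $V\times S^1$ cell structure was built by hand; here $M$ comes with an unspecified triangulation and no such control is available.

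The paper's remedy, which your outline is missing, is to run a \emph{second} application of \eqref{eqn:gluing} to the parallel $\Rbb$-coefficient sequence
\[
0 \rightarrow \bigoplus_j C_\ast(\Sigma_j;\Rbb) \rightarrow \bigoplus_j C_\ast(M_j;\Rbb)\oplus C_\ast(Y;\Rbb) \rightarrow C_\ast(M;\Rbb)\rightarrow 0,
\]
yielding a relation among $e$, $\epsilon(o_M)$, the $\epsilon(o_{M_j})$, $\epsilon(o_Y)$, and $\epsilon(o_{\Sigma_j})$. Dividing the $\g_\rho$-identity by this $\Rbb$-identity cancels all the uncontrolled signs simultaneously. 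Without this step your argument determines $\tau_\mu(M;\rho)$ only up to sign.
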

		\begin{proof}
			Choose any triangulation of $M$ with any cell order $c_M$.
			We denote by $c_Y$ (resp., $c_{M_j}$ and $c_{\Sigma_j}$) the cell order 		restricted to  $Y$ (resp., $M_j$ and $\Sigma_j$). 
			Note that the Euler characteristics of $M$, $M_j$, $Y$, $\Sigma_j$ are even. 
			It follows that we may assume that the number of  $i$-dimensional cells in each $M$, $M_j$, $Y$, and $\Sigma_j$ is even by applying the barycentric subdivision once.
			Let $e=1$ (resp., $-1$) if the basis transition between 
			$(c_{\Sigma_1},\ldots,c_{\Sigma_n},c_M)$ and $(c_{M_1},\ldots,c_{M_n},c_Y)$ is an even (resp., odd) permutation.

			Applying the formula \eqref{eqn:gluing} to the short exact sequence \eqref{eqn:ese3},
			 we obtain
 			\begin{align}
			 	&e \cdot \prod_{j=1}^n\mathrm{Tor}(C^\ast(M_j;\g_\rho), \mathbf{c}_{M_j}, \mathbf{h}_{M_j}) \cdot \mathrm{Tor}(C^\ast(Y;\g_\rho), \mathbf{c}_{Y}, \mathbf{h}_{Y}) \nonumber \\
			 	&=(-1)^{v+u} \mathrm{Tor}(C^\ast(M;\g_\rho), \mathbf{c}_{M}, \mathbf{h}_{M}) \cdot \prod_{j=1}^n\mathrm{Tor}(C^\ast(\Sigma_j;\g_\rho), \mathbf{c}_{\Sigma_j}, \mathbf{h}_{\Sigma_j}) \cdot  \mathrm{tor}(\mathcal{G}). \nonumber
			 \end{align} 
			where $\mathrm{tor}(\mathcal{G})$ is the Reidemeister torsion of the long exact sequence \eqref{eqn:les2} with respect to $\mathbf{h}_{M_j}$, $\mathbf{h}_Y$, $\mathbf{h}_{\Sigma_j}$, and  $\mathbf{h}_M$. 
			It is clear  from the definition \eqref{eqn:v} that $v$ is even since the number of $i$-dimensional cells in each $M$, $M_j$, $Y$ and $\Sigma_j$ is even for all $i$.
			Also, a direct computation from the definition \eqref{eqn:u} gives that  $u \equiv n$ in modulo 2.
			Recall that there are two trivial terms $H^0(M;\g_\rho)$ and $H^3(M;\g_\rho)$ in $\mathcal{G}$. Ignoring these trivial terms, we rewrite $\mathcal{G}$ as
			\begin{center}
				\begin{tikzcd}[column sep=0.6cm]
					0 \arrow[r]  & \mathcal{G}^0 \arrow[r,"G_0"] \arrow[d, "\simeq"] & \mathcal{G}^1 \arrow[r,"D_0"]  \arrow[d, "\simeq"]& \mathcal{G}^2 \arrow[r,"F_1"] \arrow[d, "\simeq"] &  \mathcal{G}^3 \arrow[r,"G_1"]  \arrow[d, "\simeq"] & \mathcal{G}^4 \arrow[r,"D_1"]  \arrow[d, "\simeq"] & \mathcal{G}^5  \arrow[r,"F_2"] \arrow[d, "\simeq"] & \mathcal{G}^6 \arrow [r,"G_2"] \arrow[d, "\simeq"] & \mathcal{G}^7 \arrow[r] \arrow[d, "\simeq"] & 0\\ 
					& \Cbb^{k+1} \arrow[r,"G_0"]& \Cbb^n \arrow[r,"D_0"] & \Cbb^{n-k} \arrow[r,"F_1"] & \Cbb^{2n+1} \arrow[r,"G_1"] & \Cbb^{2n} \arrow[r,"D_1"]&  \Cbb^{n-k} \arrow[r,"F_2"]& \Cbb^{2n-k} \arrow[r,"G_2"] & \Cbb^n &
				\end{tikzcd}
			\end{center}
		where the first and second rows are identified with respect to $\mathbf{h}_{M_j}$, $\mathbf{h}_Y$, $\mathbf{h}_{\Sigma_j}$, and $\mathbf{h}_M$.
		We	choose a tuple $b^i$ of vectors in $\mathcal{G}^i$ as
			\begin{align*}
				& b^0 = (\mathfrak{e}_1,\mathfrak{e}_2,\ldots, \mathfrak{e}_{k+1}),\ b^1=(\mathfrak{e}_{k+1}, \mathfrak{e}_{k+2},\ldots, \mathfrak{e}_{n-1}), \ b^2 = \mathfrak{e}_{1}\\
				& b^3=(\mathfrak{e}_1,\mathfrak{e}_2,\ldots, \mathfrak{e}_n,\, \mathfrak{e}_{n+2},\mathfrak{e}_{n+3},\ldots,\mathfrak{e}_{2n+1}),\ b^4 = \emptyset, \\
				& b^5=(\mathfrak{e}_1, \mathfrak{e}_2,\ldots, \mathfrak{e}_{n-k}),\  b^6=(\mathfrak{e}_{n-k+1},\mathfrak{e}_{n-k+2},\ldots,\mathfrak{e}_{2n-k}),\  b^7=\emptyset
			\end{align*}
			where $\mathfrak{e}_k$ is a unit vector whose coordinates are all zero, except one at the $k$-th coordinate.
			Then the basis transition matrix $A_i$ at $\mathcal{G}^i$ (see Section \ref{sec:cw}) is given by
			\begin{align*}
				& A_0=I_{k+1}, \ 
				A_1=\begin{pmatrix}
					I_k & \rvline & \begin{matrix} -1 \\[-5pt] \vdots \\[-3pt] -1 \end{matrix} & \rvline & 0 \\
					\hline
					0 & \rvline & \begin{matrix} -1 \\[-5pt] \vdots \\[-3pt] -1 \end{matrix} & \rvline & I_{n-k-1} \\
					\hline
					0 & \rvline & -1& \rvline & 0 
				\end{pmatrix},\ 
			A_2=\begin{pmatrix} 
				0 & 1 \\
				I_{n-k-1} & 0
			\end{pmatrix}, \
		A_3=\begin{pmatrix}
			\begin{matrix} 1 \\[-5pt] \vdots \\[-3pt] 1 \end{matrix} & \rvline & I_n & \rvline & 0 \\
			\hline
			1 & \rvline & 0 & \rvline & 0\\
			\hline
			\begin{matrix} \kappa_1 \\[-5pt] \vdots \\[-3pt] \kappa_n \end{matrix} & \rvline & 0& \rvline & I_n
		\end{pmatrix},\\
			& A_4=\begin{pmatrix}
			1 & \cdots & 0 & \rvline  & 0 & \cdots & 0 \\[-2pt]
			\kappa_1&   & 0 & \rvline   & -1 &  & 0\\[-2pt]
			\vdots & \ddots& \vdots & \rvline & \vdots & \ddots & \vdots \\[-2pt]
			0 &  & 1 &  \rvline &  0 & & 0  \\[-2pt]
			0 & \cdots & \kappa_n  & \rvline& 0 & \cdots & -1 
		\end{pmatrix},\ A_5= I_{n-k}, \ 
		A_6=\begin{pmatrix}
		I_{n-k} & \rvline & 0 \\
		\hline
		\begin{matrix}
			0 \\
			I_{n-k}
		\end{matrix} & \rvline &  I_{n} 
	\end{pmatrix},\ A_7=-I_n.
			\end{align*}
		It follows that  $\mathrm{tor}(\mathcal{G}) = (-1)^{n-k} \, (-1)^{n-k-1}\,   (-1)^{n}\, (-1)^{\frac{n(n+1)}{2}} (-1)^n=(-1)^{\frac{n(n+1)}{2}+1 }$. 
		Therefore, we conclude that 
			\begin{align}
			&e \cdot \prod_{j=1}^n\mathrm{Tor}(C^\ast(M_j;\g_\rho), \mathbf{c}_{M_j}, \mathbf{h}_{M_j}) \cdot \mathrm{Tor}(C^\ast(Y;\g_\rho), \mathbf{c}_{Y}, \mathbf{h}_{Y}) \label{eqn:proof2} \\
		&=(-1)^{\frac{n(n+1)}{2}+n+1} \, \mathrm{Tor}(C^\ast(M;\g_\rho), \mathbf{c}_{M}, \mathbf{h}_{M}) 	\prod_{j=1}^n\mathrm{Tor}(C^\ast(\Sigma_j;\g_\rho), \mathbf{c}_{\Sigma_j}, \mathbf{h}_{\Sigma_j}). \nonumber
	\end{align} 		
		On the other hand, applying the formula \eqref{eqn:gluing} to the short exact sequence 
		\begin{equation} \label{eqn:ses3}  0 \rightarrow \bigoplus_{j=1}^n C_\ast(\Sigma_j;\Rbb) \rightarrow \bigoplus_{j=1}^n C_\ast(M_j;\Rbb) \oplus C_\ast(Y;\Rbb) \rightarrow C_\ast(M;\Rbb) \rightarrow 0,
		\end{equation}
		we have 
		\begin{align}
			&e \cdot \prod_{j=1}^n\mathrm{Tor}(C_\ast(M_j;\Rbb), c_{M_j}, h_{M_j}) \cdot \mathrm{Tor}(C_\ast(Y;\Rbb), c_{Y}, h_{Y}) \label{eqn:les3} \\
			&=(-1)^{u'+v'} \prod_{j=1}^n\mathrm{Tor}(C_\ast(\Sigma_j;\Rbb), c_{\Sigma_j}, h_{\Sigma_j}) \cdot \mathrm{Tor}(C_\ast(M;\Rbb), c_{M}, h_{M}) \cdot \mathrm{tor}(\mathcal{G}'). \nonumber
		\end{align} 
		where $\mathrm{tor}(\mathcal{G}')$ is the Reidemeister torsion of the long exact sequence induced from \eqref{eqn:ses3} 
		with respect to the bases $h_{\Sigma_j}$, $h_{M_j}$, $h_Y$, and $h_M$.
		Repeating similar computations, we have $u'\equiv v' \equiv 0$ in modulo 2 and $\mathrm{tor}(\mathcal{G}') = (-1)^{\frac{n(n+1)}{2}}$.
		Then from the equation \eqref{eqn:les3} we have
		\begin{equation} \label{eqn:sign}
			e \cdot \prod_{j=1}^n\epsilon(o_{M_j}) \cdot \epsilon(o_{Y}) = (-1)^\frac{n(n+1)}{2} \prod_{j=1}^n\epsilon(o_{\Sigma_j}) \cdot \epsilon(o_{M}).
		\end{equation}
		Combining the equations  \eqref{eqn:proof2} and \eqref{eqn:sign} with Example \ref{exam:torus} and Proposition \ref{lem:YTor}, we obtain the desired formula.
		\end{proof}

		\subsection{Proofs of Theorems \ref{thm:main0} and \ref{thm:main}}	\label{sec:3.3}
		Recall that $\mathcal{X}(M)$ is the character variety of irreducible representations  $\pi_1(M) \rightarrow \G$ and $\mathcal{X}_\mu^c(M)$ is the pre-image of $c \in \Cbb$ under the trace function $\mathcal{X}(M) \rightarrow \Cbb$ of $\mu$. We use the notations $\mathcal{X}(M_j)$ and $\mathcal{X}_\mu^c(M_j)$ similarly for $1 \leq j \leq n$.
		Since we assumed that
			\begin{itemize} 
		 \item[(C)] the level set $\mathcal{X}_\mu^c(M_j)$ consists of finitely many $\mu$-regular characters with the canonical longitude having trace other than $\pm2$ for generic $c \in \Cbb$,
		\end{itemize}
		the conditions \eqref{eqn:assum} and \eqref{eqn:assum2} in Section \ref{sec:gluing}  are satisfied for generic $c \in \Cbb$.
		It follows that the adjoint Reidemeister torsion is well-defined on the level set $\mathcal{X}_\mu^c(M)$ for generic $c\in\Cbb$.
	
			\begin{lemma} 
			\label{lemma:fibers}
			The connected components of  $\mathcal{X}_\mu^c(M)$
			are the pre-images of the restriction map (i.e.~induced by the inclusions $M_j \rightarrow M$):
			\[\Phi : \mathcal{X}_\mu^c(M) \rightarrow \prod_{j=1}^n \left( \mathcal{X}_{\mu}^c(M_j) \sqcup \{[\alpha_j]\} \right)
			\setminus\{( [\alpha_1] ,\ldots,[\alpha_n])\}
			\]
			where $\alpha_j : \pi_1(M_j)\rightarrow \G$ is the abelian representation with $\tr (\alpha_j(\mu)) =c$
			\end{lemma}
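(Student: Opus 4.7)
My plan is to show that $\Phi$ is continuous with finite (hence discrete) target, which reduces the claim to proving that each non-empty fiber of $\Phi$ is connected; the latter will be handled via the bending construction. For well-definedness, given $[\rho]\in\mathcal{X}^c_\mu(M)$, each restriction $\rho_j:=\rho|_{\pi_1(M_j)}$ satisfies $\tr\rho_j(\mu)=c$, and the hypothesis $\Delta_{K_j}(m^2)\neq0$ (valid for generic $c$ by (C)) forces every reducible $\rho_j$ to be abelian and hence conjugate to $\alpha_j$. If every $\rho_j$ were conjugate to $\alpha_j$, the amalgamated-product presentation of $\pi_1(M)$ would force $\rho$ to be abelian, contradicting its irreducibility; hence $\Phi$ lands in the stated codomain. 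Continuity of $\Phi$ is immediate from the definition of the character variety, and condition (C) makes the target finite and thus discrete, so each fiber of $\Phi$ is clopen in $\mathcal{X}^c_\mu(M)$ and is therefore a union of connected components.

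To show that each non-empty fiber is connected, I fix a point $([\rho_1],\ldots,[\rho_n])$ in the image of $\Phi$ and choose representatives with $\rho_j(\mu)=\mathrm{diag}(m,m^{-1})$; then each $\rho_j(\lambda_j)$ is automatically diagonal with entries $l_j,l_j^{-1}$ determined by $[\rho_j]$. By van Kampen applied to the JSJ decomposition $M=Y\cup_{\Sigma_1}M_1\cup\cdots\cup_{\Sigma_n}M_n$, an element of the fiber is specified by a tuple $(\rho|_{M_1},\ldots,\rho|_{M_n},\rho|_Y)$ agreeing on each $\pi_1(\Sigma_j)=\langle\mu,\lambda_j\rangle$. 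After a global conjugation I normalize $\rho|_Y$ to be the standard diagonal representation sending $\mu\mapsto\mathrm{diag}(m,m^{-1})$ and $\lambda_j\mapsto\mathrm{diag}(l_j,l_j^{-1})$. Each $\rho|_{M_j}$ then equals $C_j\rho_jC_j^{-1}$ for some $C_j\in\G$ commuting with both $\rho_j(\mu)$ and $\rho_j(\lambda_j)$; by (C), at least one of these is a regular diagonal matrix, so $C_j$ lies in the diagonal torus $\Cbb^\ast$. The residual global conjugation, namely by the centralizer $\Cbb^\ast$ of $\rho|_Y$, acts by simultaneous scaling on the tuple $(C_1,\ldots,C_n)$, so the fiber is the image of a continuous surjection from the connected algebraic group $(\Cbb^\ast)^{n-1}$, and is therefore connected.

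The main obstacle is the surjectivity step just invoked: any $[\rho]$ in the fiber must admit a representative in the normal form above, and the bending parameters $(C_j)$ must exhaust every conjugacy class lying over the fixed restrictions. This is essentially the gluing half of the bending construction recalled in \cite{johnson1987deformation, kitano2020finiteness}, applied here to parametrize a single fiber rather than a whole irreducible component. A minor subtlety is that when some $\rho_j=\alpha_j$ is abelian, the corresponding bending coordinate acts trivially on the character, which shrinks the bending parameter space but does not disconnect it, so the connectedness conclusion is unaffected.
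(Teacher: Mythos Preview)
Your proof is correct and follows essentially the same bending argument as the paper: both normalize $\rho(\mu)$ to a fixed regular diagonal matrix and observe that the residual freedom on each irreducible factor is conjugation by the diagonal torus $D\cong\Cbb^\ast$, yielding a connected parameter space $(\Cbb^\ast)^{n-k}/\Cbb^\ast$ for each fiber. The one omission is that you only treat non-empty fibers, whereas the lemma as stated (and its use in the proof of Theorem~\ref{thm:main}, where the sum over components is rewritten as a sum over the full target) requires $\Phi$ to be surjective. This follows immediately from your own construction: for any point $([\rho_1],\ldots,[\rho_n])$ in the target, taking all $C_j=I$ produces a well-defined representation of $\pi_1(M)$ in the fiber, and it is irreducible because at least one $\rho_j$ is. The paper makes this surjectivity step explicit at the outset.
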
	
		
		\begin{proof} We first prove that  $\Phi$ is surjective.  Let $\rho_j$ be a representation $\pi_1(M_{j}) \rightarrow \G$ satisfying $\tr(\rho_j(\mu))=c$ for $1 \leq j \leq n$. Since we assume that $c \neq \pm2$,  we can conjugate each
			$\rho_j$ so that
			$\rho_1(\mu)= \rho_2(\mu)=\cdots =\rho_n(\mu)$.
			This is sufficient to extend these representations to 
			$\rho:\pi_1(M)\rightarrow \G$ which is irreducible since at least one of $\rho_j$'s is irreducible.
			
			A point, say $p$, in the image of $\Phi$ is $([\alpha_1],\ldots,[\alpha_k], [\rho_{k+1}],\dotsc,[\rho_n])$ up to reordering where $\alpha_1\dots,\alpha_k$ are abelian and $\rho_{k+1},\dotsc,\rho_n$  are irreducible.
			To analyze the pre-image  $\Phi^{-1}(p)$, consider 
			two characters in $\Phi^{-1}(p)$,  those are 
			conjugacy classes of irreducible representations $\rho$ and $\rho'$ of $\pi_1(M)$. As 
			$\tr(\rho(\mu))=\tr(\rho'(\mu))\neq \pm 2$, after conjugating
			we may assume that 
			$\rho(\mu)=\rho'(\mu)$.
			Let $D\subset \mathrm{PSL}_2(\mathbb C) $ denote the centralizer of  $\rho(\mu)=\rho'(\mu)$. Since 
			$\tr(\rho(\mu))=\tr(\rho'(\mu))\neq \pm 2$, the group  $D$ is conjugate to the group of diagonal matrices and thus 
			$D\cong \mathbb{C}^*$.
			Let $\rho_j$ and $\rho_j'$ denote the respective restrictions of $\rho$ and $\rho'$ to $\pi_1(M_j)$.
			The assumption $\rho(\mu)=\rho'(\mu)$ implies that:
			\begin{itemize}
				\item For $j=1,\ldots,k$, $\rho_j=\rho_j'$. It is because of that
				the genericity assumption \eqref{eqn:assum} implies that
				$\rho_j$ and $\rho_j'$ are abelian, and an abelian representation of a knot exterior is determined by the trace of $\mu$. 
				\item For $j=k+1,\ldots,n$, $\rho_j'$ and $\rho_j$ are conjugate by some  matrix of $D$, because an irreducible representation is determined by its character.
			\end{itemize}
			Namely, $\rho$ and $\rho'$ differ by \emph{bending}  along some of the tori $\Sigma_{k+1},\dotsc,
			\Sigma_{n}$. Note that bending along  all tori $\Sigma_{k+1},\dotsc,\Sigma_{n}$ simultaneously by the same matrix in $D$ 
			does not change the conjugacy class. 
			It follows that the pre-image $\Phi^{-1}(p)$ is homeomorphic to 
			\[\underbrace{(D\times \cdots \times D)}_{n-k}/D\cong 
			\underbrace{(\Cbb^\ast \times \cdots \times \Cbb^\ast)}_{n-k}/\Cbb^\ast  
			\cong (\mathbb{C}^*)^{n-k-1}.\]	As the pre-images of $\Phi$ are connected and the image is discrete, those
			pre-images are the connected components.
		\end{proof}
		
		From Theorem~\ref{lem:torproduct} and Lemma~\ref{lemma:fibers}, we obtain Theorem \ref{thm:main0}: the adjoint Reidemeister torsion is locally constant on $\mathcal{X}_\mu^c(M)$.
		Note that the term $(m-m^{-1})^{2n-2}$ in Theorem~\ref{lem:torproduct} is the constant $(c^2-4)^{n-1}$ on $\mathcal{X}_\mu^c(M)$.
		
		On the other hand, we have
		\begin{align*}
			\frac{1}{(c^2-4)^{n-1}}\sum_{[\rho] \in \mathcal{X}_\mu^c(M)} \frac{1}{\tau_\mu(M;\rho)} 
			&=
		\prod_{j=1}^n\left(\sum_{[\rho] \in \mathcal{X}_\mu^c(M_j)} \frac{1}{\tau_\mu(M_j;\rho)} + \frac{1}{\tau_\mu(M_j;\alpha_j)} \right)-\prod_{j=1}^n \frac{1}{\tau_\mu(M_j;\alpha_j)}\\
		&=\sum_{J \subsetneq \{1,\ldots,n\}}\left( \prod_{j \notin J}  \sum_{[\rho]\in\mathcal{X}_\mu^c (M_j)}\frac{1}{\tau_\mu(M_j;\rho)}\right) \cdot \prod_{j\in J} \frac{1}{\tau_\mu(M_j;\alpha_j)}. 
		\end{align*}
	Here $J$ runs on all subsets of $\{1,\ldots,n\}$ different from the whole set ($J$ is the subset of indexes $j$   
	such that the restriction to $\pi_1(M_j)$ is abelian, hence $J$ may be empty but not the whole set). The notation $[\rho] \in \mathcal{X}_\mu^c(M)$ means that we take one representative on each connected component of $X_\mu^c(M)$, which  agrees with the ordinary sum for $M_j$.
	This completes the proof of Theorem \ref{thm:main}, because we have assumed that for each $j=1,\ldots, n$:
	\[
	\sum_{[\rho]\in\mathcal{X}_\mu^c (M_j)}\frac{1}{\tau_\mu(M_j;\rho)} =0.
	\]

	\section*{Acknowledgments} 
	We would like to thank Stavros Garoufalidis for his helpful comments on a draft version of the paper. The first author has been partially supported by the Spanish
    Micinn/FEDER  grant
    PGC2018-095998-B-I00.
    The second author was supported by Basic Science Research Program through the NRF of Korea funded by the Ministry of Education (2020R1A6A3A03037901).

		\bibliographystyle{alpha}
		\bibliography{main}

	\end{document}